\documentclass[11pt, a4paper]{amsart}
\usepackage{amssymb, array, amsmath, amscd, pdfpages, enumerate, amsthm, setspace, hyperref,mathtools}

\usepackage[utf8]{inputenc}
\usepackage{amssymb}
\usepackage{bm}
\usepackage{graphicx}

\newcommand{\yref}{y_{\mbox{\scriptsize\textit{ref}}}}
\newcommand{\wdist}{w_{\mbox{\scriptsize\textit{dist}}}}

\newcommand{\wdistj}[1][1]{w_{\mbox{\scriptsize\textit{dist}},#1}}

\newcommand{\mc}[1]{\mathcal{#1}}
\newcommand{\mf}[1]{\mathfrak{#1}}

\newcommand{\citel}[2]{\cite[#2]{#1}}

\newcommand{\yrk}[1][k]{y_r^{#1}}
\newcommand{\wdkone}[1][k]{w_{1#1}}
\newcommand{\wdktwo}[1][k]{w_{2#1}}
\newcommand{\wdkthree}[1][k]{w_{3#1}}

\renewcommand{\P}{P}

\newtheorem{theorem}{Theorem}[section]
\newtheorem{proposition}[theorem]{Proposition}
\newtheorem{lemma}[theorem]{Lemma}

\newtheorem{assumption}[theorem]{Assumption}

\theoremstyle{definition}
\newtheorem{definition}[theorem]{Definition}

\newcommand{\pmat}[1]{\begin{bmatrix}#1 \end{bmatrix}}
\newcommand{\pmatsmall}[1]{\begin{bsmallmatrix}#1\end{bsmallmatrix}}
\newcommand{\bmat}[1]{\begin{bmatrix}#1 \end{bmatrix}}

\newcommand{\eps}{\varepsilon}

\DeclareMathOperator{\re}{Re}

\DeclareMathOperator{\diag}{diag}
\DeclareMathOperator{\blkdiag}{blockdiag}

\newcommand*{\C}{{\mathbb{C}}}     
\newcommand*{\R}{{\mathbb{R}}}

\newcommand*{\Lin}{{\mathcal{L}}}   
\newcommand*{\Dom}{D}   
\newcommand{\ran}{{\mathcal{R}}}   
\renewcommand{\ker}{{\mathcal{N}}}

\newcommand*{\abs} [1]{\lvert#1\rvert}
\newcommand*{\norm}[1]{\lVert#1\rVert}
\newcommand*{\set} [1]{\{#1\}}
\newcommand*{\setm}[2]{\{\,#1\mid#2\,\}}   
\newcommand*{\iprod}[2]{\langle#1,#2\rangle}

\newcommand*{\Lp}[1][p]{L^{#1}}

\newcommand{\Abs}[2][default]{\ifthenelse{\equal{#1}{default}}{\left\lvert#2\right\rvert}{\ldelim{#1}{\lvert}#2\rdelim{#1}{\rvert}}}
\newcommand{\Norm}[2][default]{\ifthenelse{\equal{#1}{default}}{\left\lVert#2\right\rVert}{\ldelim{#1}{\lVert}#2\rdelim{#1}{\rVert}}}

\newcommand*{\Iprod}[3][default]{\ifthenelse{\equal{#1}{default}}{\left\langle#2,#3\right\rangle}{\ldelim{#1}{\langle}#2,#3\rdelim{#1}{\rangle}}}
\newcommand*{\Dualpair}[3][default]{\ifthenelse{\equal{#1}{default}}{\left\langle#2,#3\right\rangle}{\ldelim{#1}{\langle}#2,#3\rdelim{#1}{\rangle}}}

\newcommand*{\List}[2][1]{\set{#1,\ldots,#2}}

\newcommand{\eq}[1]{\begin{align*}#1\end{align*}}
\newcommand{\eqn}[1]{\begin{align}#1\end{align}}

\newcommand{\gs}{\sigma}
\newcommand{\ga}{\alpha}

\newcommand{\gd}{\delta}
\newcommand{\gl}{\lambda}
\newcommand{\gw}{\omega}

\newcommand{\ieq}[1]{$#1$}

\newcommand{\inv}{^{-1}}

\newcommand*{\ddb}[2][1]{\ifthenelse{\equal{#1}{1}}{\frac{d}{d#2}}{\frac{d^{#1}}{d#2^{#1}}}}
\newcommand*{\pd}[3][1]{\ifthenelse{\equal{#1}{1}}{\frac{\partial{#2}}{\partial{#3}}}{\frac{\partial^{#1}{#2}}{\partial#3^{#1}}}}

\newcommand*{\keyterm}[1]{\emph{#1}}
\newtheorem*{RORP}{The Robust Output Regulation Problem}

\begin{document}

  \title[A Lyapunov Approach to Robust Regulation of PHS]{A Lyapunov Approach to Robust Regulation of Distributed Port--Hamiltonian Systems}

\thispagestyle{plain}

\author[L. Paunonen]{Lassi Paunonen}
\address[L. Paunonen]{Mathematics, Faculty of Information Technology and Communication Sciences, Tampere University, PO.\ Box 692, 33101 Tampere, Finland}
\email{lassi.paunonen@tuni.fi}

\author[Y. Le Gorrec]{Yann Le Gorrec}
\address[Y. Le Gorrec]{FEMTO-ST Institute, AS2M department, Universit\'{e} de Franche-Comt\'{e}, Besan\c{c}on, France}
\email{legorrec@femto-st.fr}

\author[H. Ram\'{i}rez]{H\'{e}ctor Ram\'{i}rez}
\address[H. Ram\'{i}rez]{Universidad Tecnica Federico Santa Maria, Valparaiso, Chile}
\email{hector.ramireze@usm.cl}

\thanks{This work was supported by the Academy of Finland Grants number 298182 and 310489 held by L. Paunonen, the Agence Nationale de la Recherche/Deutsche Forschungsgemeinschaft (ANR-DFG) project INFIDHEM, ID ANR-16-CE92-0028, and the EUROPEAN COMMISSION through H2020-ITN-2017-765579 --- ConFlex,
and AC3E basal project FB0008. }

\begin{abstract}
This paper studies robust output tracking and disturbance rejection for boundary controlled infinite-dimensional port--Hamiltonian systems including second order models such as the Euler--Bernoulli beam. The control design is achieved using the internal model principle and the stability analysis using a Lyapunov approach. Contrary to existing works on the same topic no assumption is made on the external well-posedness of the considered class of PDEs. The results are applied to robust tracking of a piezo actuated tube used in atomic force imaging.
\end{abstract}

\subjclass[2010]{%
93C05, 
93B52 
35K90, 
(93B28) 
}
\keywords{Distributed port-Hamiltonian system, boundary control system, robust output regulation, controller design.} 

\maketitle

\section{Introduction}

We consider robust output regulation for a class of linear partial differential equations (PDEs) with boundary control and observation, 
namely,
\keyterm{port-Hamiltonian systems} (PHS)~\cite{LeGZwa05,JacZwa12book} 
\begin{subequations}
  \label{eq:PHSintro}
  \eqn{
    \label{eq:PHSintro1}
    \frac{\partial x}{\partial t}(z,t)
    &=P_2\frac{\partial^2 }{\partial z^2}\big({\mathcal H}(z)x(z,t)\big)+P_1\frac{\partial }{\partial z}\big({\mathcal H}(z)x(z,t)\big)
    \\[1ex]  
    \label{eq:PHSintro2}
    &\quad +(P_0-G_0)\left({\mathcal H}(z)x(z,t)\right)
    + B_d(z) \wdistj[1](t), 
    \\[1ex]
      \label{eq:PHSintroBCs}
    W_1
    \pmat{ f_\partial(t) \\ e_\partial(t) }
    &= u(t) + \wdistj[2](t),
    \quad
W_2 \pmat{ f_\partial(t) \\ e_\partial(t) }
    = \wdistj[3](t)
    \\[1ex]
     y(t)&=\tilde{W}
    \pmat{ f_\partial(t) \\ e_\partial(t) } 
  }
\end{subequations}
on a one-dimensional spatial domain $[a,b]$ (see Section~\ref{sec:PHSSummary} for detailed assumptions).
In robust regulation, the purpose of the control $u(t)\in\R^p$ is to achieve the asymptotic convergence of the output $y(t)\in \R^p$ of~\eqref{eq:PHSintro} to a given reference signal $\yref(t)$, i.e., $\norm{y(t)-\yref(t)}\to 0$ as $t\to \infty$,
despite external disturbance signals 
  $\wdist(t):=(\wdistj[1](t),\wdistj[2](t),\wdistj[3](t))$.
The signals $\yref(t)$ and $\wdist(t)$ are assumed to have the forms%
\begin{subequations}%
    \label{eq:yrefdist}
  \eqn{
    \label{eq:yrefdist1}
    \yref(t) &= a_0+\sum_{k=1}^q \left[a_k^1 \cos(\gw_k t) + a_k^2 \sin(\gw_k t)\right], \\
    \label{eq:yrefdist2}
    \wdist(t) &= b_0+\sum_{k=1}^q \left[b_k^1 \cos(\gw_k t) + b_k^2 \sin(\gw_k t)\right],
  }
\end{subequations}
for known frequencies $0=\gw_0<\gw_1<\cdots<\gw_q$ and unknown amplitudes $\set{a_k^1}_{k=0}^q,\set{a_k^2}_{k=1}^q\subset \R^p$, and $\set{b_k^1}_{k=0}^q,\set{b_k^2}_{k=1}^q\subset \R^{n_{d,1}+p+n_{d,3}}$.

Several recent articles have considered output regulation for individual linear PDEs, such as 1D heat equations~\cite{Deu15}, beam equations~\cite{JinGuo19} and wave equations~\cite{GuoZho18}.
In this paper we solve the control problem
 for a \keyterm{class} of boundary controlled 1D PDEs~\eqref{eq:PHSintro},
which 
covers many particular hyperbolic PDE systems such as boundary controlled wave equations, Schr\"odinger equations, Timoshenko and Euler--Bernoulli beam models with spatially varying physical parameters, and is used in modeling and control of flexible structures, heat exchangers, and chemical reactors.  
We focus here on \keyterm{impedance passive} PHS~\eqref{eq:PHSintro}, and solve the output regulation problem using 
a finite-dimensional 
dynamic error feedback controller 
\begin{subequations}
  \label{eq:controllerintro}
    \eqn{
      \dot{x}_c(t)&= J_c x_c(t) + \gd_c B_c (\yref(t)-y(t)), 
      \quad x_c(0)\in X_c
      \\
      \label{eq:controllerintroOutput}
      u(t)& = \gd_c B_c^\ast  x_c(t)  + D_c(\yref(t)-y(t))
    }
\end{subequations}
where $J_c$ is skew-symmetric, $B_c\in\R^{p\times n_c}$, and $D_c\in\R^{p\times p}$ satisfies $D_c\geq 0$. 
Finally, $\gd_c>0$ is a gain parameter.
In studying the class~\eqref{eq:PHSintro} of PDEs our aim is to design the controller~\eqref{eq:controllerintro} under assumptions that can be verified directly based on the properties of the original PDE~\eqref{eq:PHSintro} and the matrices $(P_0,P_1,P_2,G_0,W_1,W_2,\tilde{W})$, without the need to reformulate~\eqref{eq:PHSintro} as an abstract system.

Our results for the class~\eqref{eq:PHSintro} are based on the theoretical results on robust output regulation of \keyterm{abstract boundary control and observation systems}~\cite{CheMor03,TucWei09book} presented in this paper.
They extend the theory related to internal model based controllers for passive \keyterm{well-posed linear systems} and PHS in~\cite{RebWei03,HamPoh10,HamPoh11,HumPau18,HumKur19}, and they compose the main technical contributions of the paper. 
In particular, we 
introduce a new Lyapunov-type argument for the stability analysis of the closed-loop system consisting of the boundary control system and the controller (extending our earlier results in~\cite{PauLeGLHMNC18} for PHS with distributed control and observation).
In addition, the controller design is done without assuming well-posedness of the original control system (which was assumed in~\cite{RebWei03}) and the analysis is completed directly in the abstract boundary control system framework (whereas in~\cite{HumPau18,HumKur19} the boundary control inputs were first reformulated as distributed inputs using a state extension). 
The class~\eqref{eq:PHSintro} includes models which are not wellposed (in the sense of~\citel{Sta02}{Sec.~2}).
The stability analysis of the closed-loop system is also related to references~\cite{RamLeG14,MacCal18} studying the stability of coupled impedance passive systems in a different context {\it i.e.} when the infinite dimensional system is undamped and the controller strictly input passive.

The paper is organised as follows. In Section \ref{sec:PHSSummary} we define the considered class of boundary controlled PHS and state our main result for the PDEs~\eqref{eq:PHSintro} (these are proved later in Section~\ref{sec:RORPcontroller}).
In Sections~\ref{sec:BCS}--\ref{sec:RORPcontroller} we present our main results for
abstract boundary control systems. 
The results are applied in solving a concrete output regulation problem in Section~\ref{sec:example}.
The paper ends with some conclusions and perspectives.

\noindent\textbf{Notation.}
  If $X$ and $Y$ are Banach spaces and $A:X\rightarrow Y$ is a linear operator, we denote by $\Dom(A)$, $\ker(A)$ and $\ran(A)$ the domain, kernel and range of $A$, respectively. The space of bounded linear operators from $X$ to $Y$ is denoted by $\Lin(X,Y)$. If \mbox{$A:X\rightarrow X$,} then $\gs(A)$
  and $\rho(A)$ denote the spectrum
  and the \mbox{resolvent} set of $A$, respectively. For $\gl\in\rho(A)$ the resolvent operator is  \mbox{$R(\gl,A)=(\gl -A)^{-1}$}.  The inner product on a Hilbert space is denoted by $\iprod{\cdot}{\cdot}$.
  For $T\in \Lin(X)$ on a Hilbert space $X$ we define $\re T = \frac{1}{2}(T+T^\ast)$.
 $H^k(a,b;\R^n)$ is the $k$th order Sobolev space of functions $f:[a,b]\to \R^n$.
For $T\in \Lin(X)$ we denote $T>0$ if $T-\eps I\geq 0$ for some $\eps>0$.
\section{The Main Results for PHS}
\label{sec:PHSSummary}

In this section we summarise our main results for the class~\eqref{eq:PHSintro} of boundary controlled PDEs. 
The parameters $P_2,P_1,P_0,G_0\in\R^{n\times n}$ are assumed to satisfy
$P_2=-P_2^T$, $P_1={P}_1^T$, $P_0=-P_0^T$, $G_0=G_0^T \ge 0$, and
${\mathcal H}(\cdot)$ is a bounded and Lipschitz continuous matrix-valued function such that ${\mathcal H}(z)={\mathcal H}(z)^T$ and ${\mathcal H}(z)\ge\kappa I$,
with $\kappa > 0$, for all $z\in[a,\,b]$. The distributed disturbance input profile is assumed to satisfy $B_d(\cdot)\in \Lp[2](a,b;\R^{n\times n_{d,1}})$ and can be unknown.

We consider first and second order PHS by assuming that either $P_2$ is invertible (the system~\eqref{eq:PHSintro} is of order $N=2$) or $P_2=0$ and $P_1$ is invertible (the system is of order $N=1$).
The boundary inputs and ouputs are determined using the following \emph{boundary port variables}.
\begin{definition} 
  \label{def:BPvariables}
  The \emph{boundary port variables} $f_\partial(t)$ and $e_\partial(t)$ associated to the system \eqref{eq:PHSintro} are defined as
\eq{
  \pmat{f_\partial(t) \\ e_\partial(t)} = R_{ext}\Phi(\mc{H}x(t)), \quad\; \mbox{with} \quad R_{ext}=\frac{1}{\sqrt{2}}\pmat{Q & -Q \\ I & I}
}
where $Q\in \R^{2nN\times 2nN}$ and $\Phi(\cdot):H^{N}(a,b;\R^n)\to \R^{2nN}$ are defined so that
\begin{itemize}
  \item if $N=2$, then 
\eq{
  Q=\pmat{P_1 & P_2 \\ -P_2 & 0}, \qquad 
  \Phi(\mc{H}x):=\pmat{ ({\mathcal H} x)(b)\\\frac{\partial ({\mathcal H} x)}{\partial z}(b)\\ 
    ({\mathcal H} x)(a)\\\frac{\partial ({\mathcal H} x)}{\partial z}(a)}, 
}
whenever  $\mc{H}x\in H^2(a,b;\R^n)$.
  \item if $N=1$, then $Q=P_1$ and $\Phi(\mc{H}x) =  \pmatsmall{\mc{H}x(b)\\ \mc{H}x(a)} $ whenever $\mc{H}x\in H^1(a,b;\R^n)$.
\end{itemize}
\end{definition}

The input $u(t)\in \R^p$, output $y(t)\in \R^p$ (the numbers of inputs and outputs are the same) and the disturbance inputs $\wdist(t)=(\wdistj[1](t),\wdistj[2](t),$ $\wdistj[3](t))^T\in \R^{n_{d,1}+p+n_{d,3}}$ of the system are defined as in~\eqref{eq:PHSintro}. 
We assume the matrices $W_1,W_2$, and $\tilde{W}$ determining the inputs and outputs
satisfy the following (concrete and checkable) conditions.
As shown later in Lemma~\ref{lem:PHSpassiveCond},
part (b) of Assumption~\ref{ass:PHSW} guarantees that~\eqref{eq:PHSintro} is impedance passive.

\begin{assumption}
  \label{ass:PHSW}
  Denote  $\Sigma := \pmatsmall{0&I\\I&0}\in \R^{2nN\times 2nN}$.
We assume
  $W_1\in \R^{p\times 2nN}$ and $W_2\in \R^{n_{d,3}\times 2nN}$ with $n_{d,3}=nN-p$ and $\tilde{W}\in \R^{p\times 2nN}$ satisfy the following
 
\begin{itemize}
\setlength{\itemsep}{1ex}
\item[\textup{(a)}]  $W:=  \pmatsmall{W_1\\ W_2} \in \R^{nN\times 2nN}$ has full rank and $W\Sigma W^T\geq 0$
\item[\textup{(b)}] 
  $\iprod{(W_1^T \tilde{W}+ \tilde{W}^T W_1- \Sigma)g}{g}\geq 0$ for all $g\in\ker(W_2)$.
\end{itemize}
\end{assumption}

Our second assumption concerns stabilizability properties of~\eqref{eq:PHSintro}.
The system~\eqref{eq:PHSintro} is \keyterm{exponentially stable} if there exist $M,\ga>0$ such that
with $u(t)\equiv 0$ and $\wdist(t)\equiv 0$
we have
\eq{
  \norm{x(\cdot,t)}_{\Lp[2](a,b)}\leq M e^{-\ga t} \norm{x(\cdot,0)}_{\Lp[2](a,b)}
}
for all $x(\cdot,0)\in \Lp[2](a,b;\R^n)$ such that $\mc{H}x(\cdot,0)\in H^N(a,b;\R^n)$ and for which~\eqref{eq:PHSintroBCs} hold for $t=0$.

\begin{assumption}
  \label{ass:PHSstab}
  For any $K\in\R^{p\times p}$, $K>0$, system~\eqref{eq:PHSintro} becomes exponentially stable with output feedback $u(t)=-K y(t)$.
\end{assumption}

  The output feedback $u(t)=-Ky(t)$ alters the boundary conditions of the PDE~\eqref{eq:PHSintro} by changing $W_1$ in~\eqref{eq:PHSintroBCs} to $W_1+K \tilde{W}$.
By~\citel{HumPau18}{Lem.~7} 
Assumption~\ref{ass:PHSstab} 
holds in particular
if 
$W_1\in \R^{nN\times 2nN}$ (i.e.,~\eqref{eq:PHSintro} has $p=nN$ inputs) and if Assumption~\ref{ass:PHSW} holds.
For further results on stability of~\eqref{eq:PHSintro}, see~\cite{AugJac14}. 

Definition~\ref{def:Contrparams} contains the construction of the controller~\eqref{eq:controllerintro}. 
The controller 
 has an \keyterm{internal model} 
 of the frequencies in~\eqref{eq:yrefdist}
in the sense that 
$\set{\pm i\gw_k}_{k=1}^q\cup\set{0}$ are 
eigenvalues of $J_c$
 with geometric multiplicities equal to $p$ (see also Section~\ref{sec:RORPbackground}).

\begin{definition}
\label{def:Contrparams}
Given $0<\gw_1<\cdots<\gw_q$ in~\eqref{eq:yrefdist}, choose the parameters of the controller~\eqref{eq:controllerintro}   
 on $X_c=\R^{p(2q+1)}$ so that
$D_c>0$, $\gd_c>0$,
\begin{subequations}%
  \label{eq:Contrparams}
  \eqn{
    J_c &= \blkdiag(J_c^0, J_c^1,\ldots, J_c^q), 
    \\
    J_c^0&=0_p, \quad J_c^k = \pmat{0&\gw_k I_p\\-\gw_k I_p&0} , \\
    B_c & =  \pmat{B_c^0\\\vdots \\ B_c^q}, \qquad B_c^0=I_p, \quad B_c^k=\pmat{I_p\\0}.
  }
\end{subequations}
\end{definition}

The following theorem is the main result of this section.

\begin{theorem}
  \label{thm:PHSmain}
  Let Assumptions~\textup{\ref{ass:PHSW}} and~\textup{\ref{ass:PHSstab}} be satisfied and let $0=\gw_0<\gw_1<\cdots<\gw_q$.
  Assume~\eqref{eq:PHSintro}  has no transmission zeros at $\set{\pm i\gw_k}_{k=0}^q\subset i\R$.
For every $D_c>0$ there exists 
   $\gd_c^\ast>0$ such that for all $\gd_c\in(0,\gd_c^\ast)$
   the controller 
   in Definition~\textup{\ref{def:Contrparams}} 
   achieves output tracking and disturbance rejection for all signals in~\eqref{eq:yrefdist}.
   In particular, there exists $\ga>0$ (depending on $\gd_c\in(0,\gd_c^\ast)$) such that 
  \eqn{
    \label{eq:PHSerrconv}
    e^{\ga t}\norm{y(t)-\yref(t)} \to 0 \qquad \mbox{as} \quad t\to\infty
  }
  for all $\yref(t)$ and $\wdist(t)$ in~\eqref{eq:yrefdist} and for all initial states 
  $x(\cdot,0)\in \Lp[2](a,b;\R^n)$
  and $x_c(0)\in \R^{p(2q+1)}$
  such that $\mc{H}x(\cdot,0)\in H^N(a,b;\R^n)$ and 
  which satisfy
  the boundary conditions~\eqref{eq:PHSintroBCs} at $t=0$.

  The controller is robust in the sense that the tracking~\eqref{eq:PHSerrconv} is achieved (with a modified $\ga>0$) also if the parameters $(P_2,P_1,P_0,G_0,W_1,W_2,\tilde{W},\mc{H},B_d)$ of~\eqref{eq:PHSintro} are perturbed in such a way that Assumption~\textup{\ref{ass:PHSW}} continues to hold and the closed-loop system remains exponentially stable.
  \end{theorem}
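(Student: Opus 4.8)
The plan is to realize the concrete PHS~\eqref{eq:PHSintro} as an abstract impedance passive boundary control and observation system on the energy space $X=\Lp[2](a,b;\R^n)$ equipped with the inner product $\iprod{x_1}{x_2}_{\mc H}=\iprod{x_1}{\mc H x_2}$, and to verify the three hypotheses that make the internal model controller~\eqref{eq:controllerintro} work. First I would take the system operator to be the differential expression on the right of~\eqref{eq:PHSintro1} restricted to the domain $\set{x : \mc H x\in H^N(a,b;\R^n),\; W_2\Phi(\mc H x)=0}$, with boundary input $u=W_1\Phi(\mc H x)$ and output $y=\tilde W\Phi(\mc H x)$. The decisive structural property is impedance passivity, which part~(b) of Assumption~\ref{ass:PHSW} guarantees: along classical solutions one has the energy balance $\frac{d}{dt}\tfrac12\norm{x(t)}_{\mc H}^2\le \iprod{u(t)}{y(t)}$ (with exogenous inputs set to zero and the $G_0\ge 0$ term contributing extra dissipation). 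Together with the closed-loop exponential stabilizability supplied by Assumption~\ref{ass:PHSstab} and the hypothesis that $\set{\pm i\gw_k}_{k=0}^q$ contains no transmission zeros of~\eqref{eq:PHSintro}, these are exactly the ingredients required.

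The core of the argument is closed-loop exponential stability, which I would establish by a Lyapunov computation combined with a frequency-domain estimate. Forming the closed-loop generator $A_{cl}$ on $X\times X_c$ for the interconnection of the passive plant with~\eqref{eq:controllerintro} (with $\yref\equiv 0$ and $\wdist\equiv 0$), I would test the natural energy $V(x,x_c)=\tfrac12\norm{x}_{\mc H}^2+\tfrac12\abs{x_c}^2$. Since $J_c$ is skew-symmetric, $\iprod{x_c}{J_c x_c}=0$, so the controller has no intrinsic dissipation; substituting $u=\gd_c B_c^\ast x_c-D_c y$ and $\dot x_c=J_c x_c-\gd_c B_c y$, the cross terms $\pm\gd_c\iprod{B_c^\ast x_c}{y}$ coming from the supply rate $\iprod{u}{y}$ and from $\frac{d}{dt}\tfrac12\abs{x_c}^2$ cancel, leaving $\dot V\le -\iprod{D_c y}{y}\le 0$. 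Hence the closed-loop semigroup is contractive, but $\dot V$ controls only the output $y$, so exponential decay does not follow from the Lyapunov inequality alone. To upgrade it I would apply the Gearhart--Pr\"{u}ss frequency-domain criterion, for which it suffices to show $i\R\subset\rho(A_{cl})$ together with $\sup_{\gw\in\R}\norm{R(i\gw,A_{cl})}<\infty$. The absence of transmission zeros at $\set{\pm i\gw_k}$, combined with the internal model copies sitting in $J_c$ at precisely these frequencies, is what forces $i\gw_k\in\rho(A_{cl})$; the low-gain parameter $\gd_c$ is then taken small enough that the coupling displaces the marginally stable spectrum of $J_c$ strictly into the open left half-plane and the resolvent bound holds uniformly, giving exponential stability for every $\gd_c\in(0,\gd_c^\ast)$.

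The main obstacle is carrying out this stability analysis in the boundary control setting \emph{without} assuming well-posedness of~\eqref{eq:PHSintro}: because $u$ and $y$ act through unbounded boundary trace operators, the interconnection cannot be treated as a bounded perturbation and the well-posed-systems machinery of~\cite{RebWei03} is unavailable, so both the dissipation identity and the resolvent estimate must be derived directly for the boundary control system. Once $A_{cl}$ is exponentially stable, the remaining conclusions are structural. Viewing $\yref$ and $\wdist$ as outputs of a finite-dimensional exosystem with eigenvalues $\set{\pm i\gw_k}$, I would solve the associated regulator (Sylvester) equation; since $J_c$ contains an internal model copy of each $\pm i\gw_k$, the internal model principle forces the regulated error to have zero steady state, and the exponential stability margin of $A_{cl}$ then yields the convergence~\eqref{eq:PHSerrconv} with some $\ga>0$, the disturbances being rejected by the same mechanism. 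Finally, robustness is immediate: tracking depends only on the fixed internal model in the controller and on closed-loop exponential stability, not on the precise values of $(P_2,P_1,P_0,G_0,W_1,W_2,\tilde W,\mc H,B_d)$, so any perturbation preserving Assumption~\ref{ass:PHSW} and exponential stability leaves~\eqref{eq:PHSerrconv} valid with a modified rate $\ga$.
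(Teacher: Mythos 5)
Your outer skeleton matches the paper's: realize \eqref{eq:PHSintro} as an impedance passive boundary control system on the $\mc{H}$-weighted $\Lp[2]$ space (passivity coming from Assumption~\ref{ass:PHSW}(b), as in Lemma~\ref{lem:PHSpassiveCond}), prove closed-loop exponential stability for small $\gd_c$, and then get tracking and robustness from the internal model via regulator/Sylvester equations (the paper's Theorem~\ref{thm:IMP}). Your contractivity computation with the natural energy is also correct. The gap is in the core step, exponential stability of the closed loop, which you propose to obtain from contractivity via Gearhart--Pr\"uss. The uniform resolvent bound $\sup_{\gw\in\R}\norm{R(i\gw,A_{cl})}<\infty$ --- which \emph{is} the mathematical content of the stability claim --- is simply asserted (``the resolvent bound holds uniformly''). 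Solving $(i\gw-A_{cl})(x,x_c)=(f,g)$ decomposes the resolvent into pieces that each need a bound uniform in $\gw$: (i) $\mf{C}R(i\gw,A_{D_c})$, where $A_{D_c}$ is the plant pre-stabilized by $u=-D_cy$ (this one follows from admissibility of $C$ plus exponential stability); (ii) the boundary-data-to-state map $v\mapsto x$ solving $(i\gw-\mf{A})x=0$, $(\mf{B}+D_c\mf{C})x=v$; and (iii) the inverse of $i\gw-J_c+\gd_c^2B_cP_{D_c}(i\gw)B_c^\ast$. Piece (ii) is precisely an input-admissibility/well-posedness property, and the class \eqref{eq:PHSintro} deliberately contains systems that are \emph{not} well-posed; nothing in Assumptions~\ref{ass:PHSW}--\ref{ass:PHSstab} supplies it, and it does not follow from contractivity or exponential stability of $A_{D_c}$ alone (already $\norm{i\gw R(i\gw,A_{D_c})}$ need not be uniformly bounded). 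Your own remark that the resolvent estimate ``must be derived directly'' acknowledges this obstacle but does not overcome it; likewise, your spectral statement about displacing the marginal spectrum of $J_c$ concerns only neighbourhoods of the finitely many points $\pm i\gw_k$ and says nothing about the regime $\abs{\gw}\to\infty$, which is exactly where non-well-posedness bites.

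The paper circumvents this obstruction entirely: its stability proof (Theorem~\ref{thm:CLstabpert}) is a time-domain Lyapunov argument. One first solves the Sylvester-type system $HJ_c=\mf{A}H$, $\mf{B}H=-B_c^\ast$ (Lemma~\ref{lem:IMstabpert}) and uses $V_e=\iprod{x+\gd_cHx_c}{\P(x+\gd_cHx_c)}+\iprod{x_c}{\P_cx_c}$; the shift by $H$ places $x+\gd_cHx_c$ in $\Dom(A)=\ker(\mf{B})$, so the unbounded boundary coupling is absorbed and only two ingredients are needed: a Lyapunov solution $\P$ for the exponentially stable pre-stabilized plant (here Assumption~\ref{ass:PHSstab} with $K=D_c$ enters, together with admissibility of $C$ alone --- never of the input operator), and a Lyapunov solution $\P_c$ for the matrix $J_c+\gd_c^2B_c\mf{C}H$, which is Hurwitz because $\re P(\pm i\gw_k)>0$; the paper extracts this positivity from the no-transmission-zero hypothesis by splitting the feedthrough as $D_c=K_0+K_0$ with $K_0=\frac12 D_c$. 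Note also that in your sketch Assumption~\ref{ass:PHSstab} is listed as an ``ingredient'' but never actually used in the frequency-domain argument, although without it you cannot even place $i\gw\in\rho(A_{cl})$ for $\gw\notin\set{\pm\gw_k}_{k}$. As written, then, the proposal does not prove the theorem: you would need either to supply the missing admissibility-type estimate (ii), which is unavailable in this generality, or to replace the Gearhart--Pr\"uss step by a Lyapunov construction of the paper's kind.
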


The proof of Theorem~\ref{thm:PHSmain} is presented 
in Section~\ref{sec:RORPcontroller}.
If $\gw_0=0$ is a transmission zero, then $J_c^0$ and $B_c^0$ 
can be removed from the controller parameters in~\eqref{eq:Contrparams} and
Theorem~\ref{thm:PHSmain} holds for $\yref(t)$ and $\wdist(t)$ with $a_0=0$ and $b_0=0$.

\section{Background on Boundary Control Systems}
\label{sec:BCS}

Our main abstract results are formulated 
for the general class of 
\keyterm{boundary control and observation systems}~\cite{Sal87a,CheMor03}%
\begin{subequations}
  \label{eq:BCSplant}
  \eqn{
    \dot{x}(t)&=\mf{A}_0 x(t) + B_d \wdistj[1](t), 
    \qquad x(0)=x_0\in Z\\
    \label{eq:BCSplantInput}
    \mf{B}x(t)&=u(t)+\wdistj[2](t)\\
    \mf{B}_dx(t)&=\wdistj[3](t)\\
y(t)&=\mf{C} x (t) 
  }
\end{subequations}
on a Hilbert space $X$.
We present these abstract results \keyterm{only in the case $D_c=0$}. 
  This simplification does not result in loss of generality, because if $D_c\neq 0$, then~\eqref{eq:BCSplantInput} 
  becomes
  \eqn{
    \label{eq:DcRewrite}
    (\mf{B}+D_c\mf{C})x(t)&=\tilde{u}(t)+(\wdistj[2](t) + D_c \yref(t))
  }
  (which has the same structure as~\eqref{eq:BCSplantInput})
  where $\tilde{u}(t)$ is the control produced by the controller~\eqref{eq:controllerintro} with $D_c=0$.
We make the following standard assumptions on the parameters of~\eqref{eq:BCSplant}.

\begin{assumption}
  \label{ass:BCSass}
  We assume  $X$ and $Z\subset X$ are (complex) Hilbert spaces and 
  $\mf{A}_0\in \Lin(Z, X)$, $\mf{B}\in \Lin(Z,\C^p)$, 
    $B_d\in \Lin(\C^{n_{d,1}},X)$,
    $\mf{B}_d\in \Lin(Z,\C^{n_{d,3}})$ and $\mf{C}\in  \Lin(Z, \C^p)$ have the properties:
  \begin{itemize}
      \setlength{\itemsep}{1ex}
    \item[\textup{(a)}] The operator $A:=\mf{A}_0\vert_{\Dom(A)}$ with $\Dom(A)= \ker(\mf{B})\cap \ker(\mf{B}_d)$ generates a contraction semigroup $T(t)$ on $X$.
    \item[\textup{(b)}]  The operator 
	$\pmatsmall{\mf{B}\\ \mf{B}_d}\in
	\Lin(Z,\C^{p+n_{d,3}} )$
      is surjective.
    \item[\textup{(c)}] $\re \iprod{\mf{A}x}{x}\leq \re \iprod{\mf{B}x}{\mf{C}x}_{\C^p}$ for all $x\in Z$.
  \end{itemize}
\end{assumption}

By~\citel{MalSta07}{Thm. 3.4} part (c) of Assumption~\ref{ass:BCSass} is equivalent to the system~\eqref{eq:BCSplant} being impedance passive in the sense that 
\eq{
  \frac{1}{2}\ddb{t}\norm{x(t)}_X^2 \leq \iprod{u(t)}{y(t)}_{\C^p}.
}
We also denote $\mf{A}:=\mf{A}_0\vert_{\Dom(\mf{A})}$ with $\Dom(\mf{A})=  \ker(\mf{B}_d)$, and in this notation we have
  $A=\mf{A}\vert_{\Dom(A)}$ and $\Dom(A)= \Dom(\mf{A})\cap \ker(\mf{B})$.

For $\gl\in\rho( A)$ we denote the transfer function (from the input $u(t)$ to the output $y(t)$) of the system~\eqref{eq:PHSintro} by $P(\gl)$. 
By~\citel{CheMor03}{Thm. 2.9}, for any $u\in U$ and $\gl\in\rho(A)$ we have $P(\gl)u=\mf{C}x$ where $x\in Z$ is such that $(\gl-\mf{A})x=0$ and $\mf{B}x=u$.
If we denote $\re T=\frac{1}{2} (T+T^\ast)$, then the passivity of the system implies that $\re P(i\gw)\geq 0$ for all $i\gw \in \rho( A)\cap i\R$, see~\cite{Sta02}.

We assume the controller~\eqref{eq:controllerintro}
  on $X_c=\C^{n_c}$ satisfies $J_c^\ast = -J_c\in \C^{n_c\times n_c}$, $B_c\in \C^{n_c\times p}$, $D_c\in \C^{p\times p}$ with $D_c\geq 0$ and $\gd_c>0$
    (as mentioned above, in Sections~\ref{sec:BCS}--\ref{sec:RORPcontroller} we let $D_c=0$).
We  now show that the closed-loop system consisting of~\eqref{eq:BCSplant} and the controller~\eqref{eq:controllerintro} on $X_c=\C^{n_c}$ leads to a well-defined closed-loop state $x_e(t):=(x(t),x_c(t))^T$ and regulation error $e(t)$ for all reference and disturbance signals in~\eqref{eq:yrefdist}.
The closed-loop system 
(with $D_c=0$) 
has the form
  \eq{
    \dot{x}_e(t)&= \pmat{\mf{A}_0&0\\-\gd_c B_c \mf{C}&J_c} x_e(t) + \pmat{B_d&0\\0&\gd_c B_c}\pmat{\wdistj[1](t)\\\yref(t)}\\
    \MoveEqLeft\bmat{\mf{B}&-\gd_c B_c^\ast\\\mf{B}_d&0}x_e(t)
    =\pmat{\wdistj[2](t)\\\wdistj[3](t)}\\
    e(t)&=\pmat{\mf{C},\,0}x_e(t) - \yref(t) 
  }
  with state $x_e(t)=(x(t),x_c(t))^T\in X_e:=X\times X_c$.
  We denote
  \eq{
    \mf{A}_e = \pmat{\mf{A}_0&0\\-\gd_c B_c \mf{C}&J_c} , ~
    \mf{B}_e = \pmat{\mf{B} &-\gd_c B_c^\ast\\\mf{B}_d&0}, 
  }
  $B_e=\pmatsmall{B_d& 0\\0& \gd_c B_c}
$,
  and $\mf{C}_e=\pmat{\mf{C},\, 0}$.

\begin{proposition}
\label{prop:CLexistence}
Under Assumption~\textup{\ref{ass:BCSass}} and
for $J_c^\ast=-J_c$ and $D_c=0$ the operator $A_e:=\mf{A}_e\vert_{\ker(\mf{B}_e)}$ generates a strongly continuous contraction semigroup $T_e(t)$ on $X_e$.
For any $\yref(\cdot)\in C^2([0,\infty);\C^p)$ and $\wdist(\cdot)\in C^2([0,\infty);\C^{n_{d,1}+p+n_{d,3}})$
and for all initial states $x(0)\in Z$
and $x_c(0)\in X_c$ satisfying the compatibility conditions $\mf{B} x(0)=\gd_c B_c^\ast x_c(0)+\wdistj[2](0) $ and $\mf{B}_dx(0)=\wdistj[3](0)$ the closed-loop system has a state
  \eq{
    x(\cdot)&\in C(0,T;Z)\cap C^1(0,T;X), \qquad
    x_c(\cdot)\in C^1(0,T; X_c)
  }
  and $e(t)=y(t)-\yref(t)\in C(0,T;\C^p)$ for all $T>0$.  
\end{proposition}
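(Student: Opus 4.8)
The plan is to obtain the semigroup generation from the Lumer--Phillips theorem and the classical solution by treating the closed loop as a boundary control system reduced to an inhomogeneous abstract Cauchy problem. For Lumer--Phillips I first check that $\Dom(A_e)=\ker(\mf{B}_e)$ is dense and that $A_e$ is dissipative. Density follows because, given $(x,x_c)\in X_e$, Assumption~\ref{ass:BCSass}(b) provides $x_\ast\in Z$ with $\mf{B}x_\ast=\gd_c B_c^\ast x_c$ and $\mf{B}_d x_\ast=0$, so that $x_\ast+\Dom(A)$ is exactly the set of admissible first components for this $x_c$ and is dense in $X$ (as $\Dom(A)$ is); finite dimensionality of $X_c$ then gives density of $\ker(\mf{B}_e)$. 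For dissipativity, on $\Dom(A_e)$ one has $\mf{B}x=\gd_c B_c^\ast x_c$ and $\mf{B}_d x=0$, and using $\re\iprod{J_c x_c}{x_c}=0$ (from $J_c^\ast=-J_c$) together with Assumption~\ref{ass:BCSass}(c) the cross terms cancel to give $\re\iprod{A_e x_e}{x_e}_{X_e}\le0$.

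The decisive step is the range condition $\ran(\gl-A_e)=X_e$ for some $\gl>0$. Fixing $\gl>0$ (so $\gl\in\rho(A)$ since $A$ generates a contraction semigroup) and data $(y,y_c)$, the map $(\gl-\mf{A}_0,\mf{B},\mf{B}_d):Z\to X\times\C^p\times\C^{n_{d,3}}$ is a bijection by $\gl\in\rho(A)$ and Assumption~\ref{ass:BCSass}(b), so I can write the plant component as $x=R(\gl,A)y+E_\gl(\gd_c B_c^\ast x_c)$ with $E_\gl\in\Lin(\C^p,Z)$ the solution operator of $(\gl-\mf{A}_0)x=0$, $\mf{B}_d x=0$, $\mf{B}x=\cdot$. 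Since $\mf{C}E_\gl=P(\gl)$, substituting $\mf{C}x$ into the controller equation collapses the whole system to the finite-dimensional equation
\[
  \bigl(\gl I-J_c+\gd_c^2 B_c P(\gl)B_c^\ast\bigr)x_c=y_c-\gd_c B_c\mf{C}R(\gl,A)y.
\]
Impedance passivity enters through the invertibility of $M(\gl):=\gl I-J_c+\gd_c^2 B_c P(\gl)B_c^\ast$: applying Assumption~\ref{ass:BCSass}(c) to $x=E_\gl u$ (for which $\mf{A}_0 x=\gl x$) gives $\gl\norm{E_\gl u}^2\le\re\iprod{u}{P(\gl)u}$, hence $\re P(\gl)\ge0$, and combined with $\re\iprod{J_c x_c}{x_c}=0$ this yields $\re\iprod{M(\gl)x_c}{x_c}\ge\gl\norm{x_c}^2>0$ for $x_c\neq0$. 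Thus $M(\gl)$ is invertible, $x_c$ and then $x$ are recovered in $\Dom(A_e)$, and Lumer--Phillips produces the contraction semigroup $T_e(t)$.

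For the solution claim I view $(\mf{A}_e,\mf{B}_e)$ as a boundary control system with distributed input $B_e$: the operator $\mf{B}_e$ is surjective with bounded right inverse $\mf{B}_e^\dagger(w_2,w_3)=(L(w_2,w_3),0)$, where $L$ is a bounded right inverse of $[\mf{B};\mf{B}_d]$ from Assumption~\ref{ass:BCSass}(b), the operator $B_e$ is bounded, and $A_e$ generates $T_e(t)$ by the above. Setting $z_e(t)=x_e(t)-\mf{B}_e^\dagger v(t)$ with $v=(\wdistj[2],\wdistj[3])$ removes the boundary input and gives $\dot z_e=A_e z_e+f$ with $f(t)=\mf{A}_e\mf{B}_e^\dagger v(t)+B_e(\wdistj[1],\yref)^T(t)-\mf{B}_e^\dagger\dot v(t)$. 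Since $\yref,\wdist\in C^2$ we get $f\in C^1([0,\infty);X_e)$, while the compatibility conditions read exactly $\mf{B}_e x_e(0)=v(0)$, so $z_e(0)\in\ker(\mf{B}_e)=\Dom(A_e)$; the standard existence theorem for the inhomogeneous Cauchy problem then yields $z_e\in C^1([0,\infty);X_e)\cap C([0,\infty);\Dom(A_e))$. Consequently $x_e=z_e+\mf{B}_e^\dagger v$ has the stated regularity and $e=\mf{C}x-\yref\in C(0,T;\C^p)$ because $\mf{C}$ is bounded on $Z$.

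I expect the range condition to be the crux—specifically, turning the impedance passivity of Assumption~\ref{ass:BCSass}(c) into the positive-real bound $\re P(\gl)\ge0$ and pushing it through the skew-symmetry of $J_c$ to invert $M(\gl)$. The dissipativity estimate, the construction of the lifting $\mf{B}_e^\dagger$, and the inhomogeneous Cauchy argument are then routine once the boundary value problem $(\gl-\mf{A}_0,\mf{B},\mf{B}_d)$ is known to be boundedly invertible.
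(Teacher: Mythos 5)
Your proposal is correct, and its overall skeleton (Lumer--Phillips for generation, then classical boundary-control-system theory for the solutions) matches the paper's proof; the dissipativity computation is essentially identical. Where you genuinely diverge is in the two remaining steps. For the range condition, the paper constructs the preimage directly: it solves an abstract elliptic problem for $x$ with a cleverly chosen boundary datum involving $Q_2(\gl)\inv$, where $Q_1(\gl)=I+P(\gl)P_c(\gl)$, $Q_2(\gl)=I+P_c(\gl)P(\gl)$ and $P_c(\gl)=\gd_c^2 B_c^\ast R(\gl,J_c)B_c$, and then defines $x_c$; the invertibility of $Q_1,Q_2$ is only asserted (``it can be shown''). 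You instead eliminate the plant through the solution operator $E_\gl$ and reduce everything to inverting the finite-dimensional matrix $M(\gl)=\gl I-J_c+\gd_c^2 B_c P(\gl)B_c^\ast$, which you prove invertible from the accretivity bound $\re\iprod{M(\gl)x_c}{x_c}\geq \gl\norm{x_c}^2$, itself derived from $\re P(\gl)\geq 0$ via Assumption~\ref{ass:BCSass}(c) applied to $x=E_\gl u$. The two computations are algebraically equivalent (Schur-complement rearrangements of the same coupled system), but your version is more self-contained and makes explicit exactly where impedance passivity enters; the paper's version is shorter but leaves a key invertibility claim unproved in the text. For the regularity of solutions, the paper simply cites \cite[Prop.~4.2.10 and Prop.~10.1.8]{TucWei09book}, whereas you re-derive that result by hand: lifting the boundary data with a bounded right inverse $\mf{B}_e^\dagger$ and solving the resulting inhomogeneous Cauchy problem with $C^1$ forcing and compatible initial data. (One small point you gloss over: concluding $x\in C(0,T;Z)$ from $z_e\in C([0,T];\Dom(A_e))$ uses the equivalence of the graph norm of $A_e$ and the $Z\times X_c$-norm on $\ker(\mf{B}_e)$, which follows from the closed graph theorem since $\mf{A}_e\in\Lin(Z\times X_c,X_e)$; this is exactly what the cited propositions encapsulate.) You also verify density of $\Dom(A_e)$, which Lumer--Phillips requires and the paper leaves implicit. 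Net effect: your route trades the paper's brevity-by-citation for a complete, passivity-transparent argument.
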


\begin{proof}
  The closed-loop system is a boundary control and observation system on the spaces $Z\times X_c$ and $X_e=X\times X_c$. The operator $\mf{B}_e$ is surjective due to Assumption~\ref{ass:BCSass}(b).
  Our aim is to show that $A_e$ generates a contraction semigroup on $X_e$.
  Since $\mf{C}_e\in \Lin(Z\times X_c,\C^p)$ and $B_d$ and $B_c$ are bounded, 
  the properties of the closed-loop system's state then follow (due to linearity) from~\citel{TucWei09book}{Prop. 4.2.10 and Prop. 10.1.8}.
We now use the Lumer--Phillips Theorem.
Let $x_e:=(x,x_c)^T\in \ker(\mf{B}_e)$.
Then $\mf{B}x=\gd_c B_c^\ast x_c$ and $\mf{B}_dx=0$. In particular $x\in \Dom(\mf{A})$ and $\mf{A}_0x=\mf{A}x$.  
  The impedance passivity of $(\mf{A},\mf{B},\mf{C})$ implies $\re \iprod{\mf{A}x}{x}\leq \re \iprod{\mf{B}x}{\mf{C}x}$ for all $x\in Z$~\citel{MalSta07}{Thm. 3.4}.
  Thus 
  \eq{
    \re \iprod{A_e x_e }{x_e}
    &= \re \iprod{\mf{A}x }{x} + \re \iprod{J_c x_c - \gd_c B_c \mf{C}x}{x_c}\\
    & \leq \re \iprod{\mf{B}x }{\mf{C}x} - \re \iprod{ \mf{C}x}{\gd_c B_c^\ast x_c}=0,
  }
  since $J_c$ is skew-adjoint and $\gd_c B_c^\ast x_c = \mf{B}x$. Therefore $A_e$ is dissipative, and it remains to show that $\gl-A_e$ is surjective for some $\gl>0$. 
  Let $\gl>0$, $y_1\in X$, and $y_2\in X_c$ be arbitrary. We will construct $x_e:=(x,x_c)^T\in \ker(\mf{B}_e)$ such that $(y_1,y_2)^T=(\gl-A_e)x_e$.
  Recall that $P(\gl)$ is the transfer function of $(\mf{A},\mf{B},\mf{C})$ 
  and denote $P_c(\gl)=\gd_c^2 B_c^\ast R(\gl,J_c) B_c$.
Since $\gl>0$ is real, we have $P_c(\gl)\geq 0$ and $P(\gl)\geq 0$, and it can be shown that $Q_1(\gl):=I+P(\gl)P_c(\gl)$ and $Q_2(\gl):=I+P_c(\gl)P(\gl)$ are boundedly invertible.  
  Denote $R_\gl=R(\gl,A)$ and $R_\gl^c = R(\gl,J_c)$ for brevity.
  Due to the theory in~\cite{CheMor03},~\citel{TucWei09book}{Ch. 10} the  ``abstract elliptic problem'' 
  \eq{
    (\gl-\mf{A})x& = y_1 \\
    \mf{B}x&= 
    Q_2(\gl)\inv
    (\gd_c B_c^\ast R_\gl^c y_2-P_c(\gl)\mf{C}R_\gl y_1)
  }
  has a solution $x\in Z$. Now~\citel{CheMor03}{Thm. 2.9} and linearity imply
  \eq{
    \mf{C} x &= \mf{C}R_\gl y_1  
    + P(\gl)Q_2(\gl)\inv
    (\gd_c B_c^\ast R_\gl^c y_2-P_c(\gl)\mf{C}R_\gl y_1)\\
    &=  
    Q_2(\gl)\inv 
    (\mf{C}R_\gl y_1+\gd_c P(\gl)B_c^\ast R_\gl^c y_2).
  }
  If we now define
  \eq{
    x_c= R_\gl^c y_2  -\gd_c  R_\gl^c B_c
    Q_1(\gl)\inv
    (\mf{C}R_\gl y_1 + \gd_c P(\gl)B_c^\ast R_\gl^c y_2),
  }
  then 
  \eq{
    \gd_c B_c^\ast x_c &= \gd_c B_c^\ast R_\gl^c y_2  \\
 &\quad   - P_c(\gl)Q_1(\gl)\inv
    (\mf{C}R_\gl y_1 + \gd_c P(\gl)B_c^\ast R_\gl^c y_2)\\
    &= 
    Q_2(\gl)\inv
    ( \gd_c B_c^\ast R_\gl^c y_2-P_c(\gl)\mf{C}R_\gl y_1 ) = \mf{B}x
  }
  and thus $x_e:=(x,x_c)^T$ satisfies $\mf{B}_ex_e=0$. A direct computation also shows that 
  \ieq{
    -\gd_c B_c \mf{C}x + (\gl-J_c)x_c = y_2,
  }
  and thus indeed $(y_1,y_2)^T = (\gl-A_e)x_e$.
\end{proof}

\section{Robust tracking and disturbance rejection}
\label{sec:RORPbackground}

  In this section we formulate the robust output regulation problem and present a general condition for a controller~\eqref{eq:controllerintro} to solve this problem.

\begin{RORP}
Let $0<\gw_1<\cdots<\gw_q$.
  Choose a controller~\eqref{eq:controllerintro} in such a way that the following hold.
\begin{itemize}
  \setlength{\itemsep}{.5ex}
  \item[\textup{(a)}] 
    The semigroup $T_e(t)$ generated by $A_e=\mf{A}_e\vert_{\ker(\mf{B}_e)}$ is exponentially stable.
  \item[\textup{(b)}] 
    There exists $\ga>0$ such that for all $\yref(t)$ and $\wdist(t)$ of the form~\eqref{eq:yrefdist} and for all
    initial states $x(0)\in Z$ and $x_c(0)\in X_c$
    satisfying  the boundary conditions of~\eqref{eq:BCSplant}
  the regulation error satisfies
\eq{
e^{\ga t}\norm{y(t)-\yref(t)} \to 0 \qquad \mbox{as} \quad t\to\infty.
}
\item[\textup{(c)}] If
$(\mf{A}_0,\mf{B},\mf{B}_d,B_d,\mf{C})$
in~\eqref{eq:BCSplant} 
are perturbed in such a way that Assumption~\textup{\ref{ass:BCSass}} is satisfied and
the perturbed closed-loop operator generates an exponentially stable semigroup, 
  then \textup{(b)} continues to hold for some $\tilde{\ga}>0$.
\end{itemize}
\end{RORP}

The robust output regulation problem only has a solution if
 the control system does not have transmission zeros at  
 $\set{\pm i\gw_k}_{k=0}^q$ (a transmission zero at $\gl\in\rho(A)$ is equivalent to $P(\gl)\in \C^{p\times p}$ being singular).
 For impedance passive systems it is natural to make the following stronger assumption. 

\begin{assumption}
  \label{ass:Piwksur}
Let $0=\gw_0<\gw_1<\cdots<\gw_q$.
We assume 
    $\pm i\gw_k\in \rho( A)$ and $\re P(\pm i\gw_k)>0$ for all $k\in \List[0]{q}$.
\end{assumption}

The following theorem shows that a controller incorporating an \emph{internal model} 
(in the sense of conditions~\eqref{eq:Gconds} below)
will solve the robust output regulation problem provided that the closed-loop system is exponentially stable. The result generalises~\citel{HumPau18}{Thm.~4} 
  by removing the assumption of regularity (and well-posedness) of the closed-loop system,
  and the proof is completed without reformulating~\eqref{eq:BCSplant} as a system 
  with extended state and
  distributed inputs.

\begin{theorem}
  \label{thm:IMP}
Let $0=\gw_0<\gw_1<\cdots<\gw_q$.
  A controller~\eqref{eq:controllerintro} with $J_c^\ast=-J_c$, $D_c=0$ and $\gd_c>0$ solves the robust output regulation problem if 
  $A_e = \mf{A}_e\vert_{\ker(\mf{B}_e)}$ generates an exponentially stable semigroup and 
  \begin{subequations}
    \label{eq:Gconds}
    \eqn{
      \label{eq:Gconds1}
      \ran(\pm i\gw_k - J_c)\cap \ran(B_c) &= \set{0}, \quad \forall k\in \List[0]{q}\\
      \label{eq:Gconds2}
      \ker(B_c)&= \set{0}.
    }
  \end{subequations}
Then there exists $\ga>0$ such that
\eq{
  e^{\ga t}\norm{y(t)-\yref(t)}\to 0, \qquad \mbox{as} \quad t\to\infty
}
for any $\yref(t)$ and $\wdist(t)$ of the form~\eqref{eq:yrefdist} and for all 
$x(0)\in Z$
and $x_c(0)\in X_c$ satisfying the compatibility conditions $\mf{B}x(0)=\gd_c B_c^\ast x_c(0)+\wdistj[2](0)$ and $\mf{B}_dx(0)=\wdistj[3](0)$.
\end{theorem}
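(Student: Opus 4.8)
The plan is to follow the internal model principle. First I would encode the signals~\eqref{eq:yrefdist} by a finite-dimensional \emph{exosystem} $\dot v=Sv$, where $S$ is diagonalizable with spectrum exactly $\set{0}\cup\set{\pm i\gw_k}_{k=1}^q$, so that every admissible pair is of the form $\yref(t)=Fv(t)$ and $\wdistj[j](t)=E_jv(t)$ ($j=1,2,3$) for fixed matrices $F,E_1,E_2,E_3$ and a suitable initial state $v(0)$. Feeding these into the closed-loop system of Section~\ref{sec:BCS}, the plant--controller pair is driven entirely by $v$, and the regulation error becomes $e(t)=\mf{C}_e x_e(t)-Fv(t)$.

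The core is then to produce a closed-loop \emph{steady state} $\Pi v(t)$ along which the error vanishes. I would seek an operator $\Pi=\pmatsmall{\Pi_1\\\Pi_2}$ mapping the exosystem state space into $\Dom(\mf{A}_e)$ and solving the regulator equations
\[ \Pi S=\mf{A}_e\Pi+B_e\pmatsmall{E_1\\F},\qquad \mf{B}_e\Pi=\pmatsmall{E_2\\E_3},\qquad \mf{C}_e\Pi=F. \]
Granting a solution $\Pi$, the difference $x_e(t)-\Pi v(t)$ satisfies the homogeneous equation and decays as $e^{-\ga t}$ by the assumed exponential stability of $A_e$; since the last equation gives $e(t)=\mf{C}_e\bigl(x_e(t)-\Pi v(t)\bigr)$, the error decays at the same rate and (b) follows. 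Everything therefore reduces to solving the first two equations and \emph{verifying} the output condition $\mf{C}_e\Pi=F$.

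The output condition is not imposed but \emph{forced} by the internal model. I would construct $\Pi$ frequency by frequency: because $A_e$ is exponentially stable we have $\pm i\gw_k\in\rho(A_e)$, and for each such point the boundary-value problem $(i\gw_k-\mf{A}_e)\xi=B_e(\cdots)$, $\mf{B}_e\xi=(\cdots)$ has a unique solution by the boundary control system theory of Section~\ref{sec:BCS} (the same abstract elliptic solvability used in the surjectivity step of Proposition~\ref{prop:CLexistence}, now evaluated at $i\gw_k$ rather than at $\gl>0$). Reading off the controller block of the first regulator equation on an eigenvector $v_k$ with $Sv_k=i\gw_k v_k$ yields
\[ (i\gw_k-J_c)\,\Pi_2 v_k=-\gd_c B_c\,(\mf{C}\Pi_1-F)v_k. \]
Here the left side lies in $\ran(i\gw_k-J_c)$ and the right side in $\ran(B_c)$, so~\eqref{eq:Gconds1} forces both to vanish, and then $\gd_c>0$ together with $\ker(B_c)=\set{0}$ from~\eqref{eq:Gconds2} gives $(\mf{C}\Pi_1-F)v_k=0$. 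Since the eigenvectors of $S$ span the complexified state space, $\mf{C}\Pi_1-F=0$, i.e. $\mf{C}_e\Pi=F$, as required. I expect the conceptual heart to be this range/injectivity argument, while the main technical obstacle is the rigorous construction of $\Pi$ on the imaginary axis in the non-well-posed boundary control setting, where $\mf{B}_e$ is an unbounded trace operator; this is exactly what $i\gw_k\in\rho(A_e)$ and the results of~\cite{CheMor03} are meant to make possible.

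Finally, robustness (c) comes almost for free, because the argument for (b) uses only the exponential stability of $A_e$, Assumption~\ref{ass:BCSass}, and the internal model conditions~\eqref{eq:Gconds}. A perturbation of $(\mf{A}_0,\mf{B},\mf{B}_d,B_d,\mf{C})$ leaves the controller data $J_c,B_c,\gd_c$ --- hence~\eqref{eq:Gconds} --- untouched, so whenever the perturbed system still satisfies Assumption~\ref{ass:BCSass} and its closed loop is exponentially stable, one re-solves the regulator equations for a new $\tilde\Pi$ and the same range/injectivity argument again forces $\mf{C}_e\tilde\Pi=F$. Exponential decay of the error then follows at some rate $\tilde\ga>0$, which is precisely (c).
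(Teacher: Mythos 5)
Your proposal follows essentially the same route as the paper's proof: your regulator equations for $\Pi$, solved frequency by frequency on the eigenvectors of $S$, are exactly the paper's equations
\eq{
  (i\mu_k-\mf{A}_e)\Sigma_k = B_e \pmat{\wdkone\\\yrk}, \qquad
  \mf{B}_e\Sigma_k = \pmat{\wdktwo\\\wdkthree}
}
for the Fourier coefficients $\Sigma_k$ of the signals; the paper simply skips the exosystem formalism and works with these coefficients directly, setting $\Sigma(t)=\sum_k e^{i\mu_k t}\Sigma_k$, which plays the role of your $\Pi v(t)$. Your range/injectivity argument on the controller block, forcing $\mf{C}_e\Sigma_k=\yrk$ from~\eqref{eq:Gconds1}--\eqref{eq:Gconds2}, and your robustness reasoning for part (c) coincide with the paper's.

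The one step you gloss over is the passage from exponential decay of $x_e(t)-\Pi v(t)$ in the $X_e$-norm to decay of the error $e(t)=\mf{C}_e\bigl(x_e(t)-\Pi v(t)\bigr)$. This is not immediate, because $\mf{C}_e$ is an unbounded trace-type operator, defined only on $Z\times X_c$ and not continuous with respect to the $X_e$-norm, so state decay alone does not bound $\norm{e(t)}$. The paper resolves this by observing that the difference is a classical solution of the homogeneous problem remaining in $\Dom(A_e)$, so that
\eq{
  e(t)=\mf{C}_e A_e\inv\, T_e(t)\, A_e\bigl(x_e(0)-\Sigma(0)\bigr),
}
and $\mf{C}_e A_e\inv\in\Lin(X_e,\C^p)$ for boundary control systems; the decay rate of $T_e(t)$ then transfers to $e(t)$ with the constant $\norm{\mf{C}_e A_e\inv}\cdot\norm{A_e(x_e(0)-\Sigma(0))}$. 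Your argument needs this (or an equivalent) observation to be complete; with it, the proof goes through exactly as you describe.
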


\begin{proof}
  Assume the closed-loop system is exponentially stable and~\eqref{eq:Gconds} are satisfied.  
Then there exist $M_e,\gw_e>0$ such that $\norm{T_e(t)}\leq M_ee^{-\gw_e t}$.
  Let $\set{\mu_k}_{k=-q}^q$ be such that 
  $\mu_k = \gw_{k}$ for $k>0$, $\mu_0=0$, and 
  $\mu_k = -\gw_{\abs{k}}$ for $k<0$. We can then write
  \eq{
    \yref(t)=\sum_{k=-q}^q \yrk e^{i\mu_k t}, 
    \qquad \wdist(t) = \sum_{k=-q}^q \pmat{\wdkone\\\wdktwo\\\wdkthree} e^{i\mu_k t}
  }
  for some constant elements $\set{\yrk}_k$, $\set{\wdkone}_k$, $\set{\wdktwo}_k$, and $\set{\wdkthree}_k$.
  Since $i\mu_k\in\rho(A_e)$ for all $k$, we have from~\citel{TucWei09book}{Sec. 10.1} that we can choose $\Sigma_k\in Z$ such that
  \begin{subequations}
    \label{eq:Sigmak}
    \eqn{
    \label{eq:Sigmak1}
      (i\mu_k-\mf{A}_e)\Sigma_k &= B_e \pmat{\wdkone\\\yrk}\\
    \label{eq:Sigmak2}
      \mf{B}_e \Sigma_k &= \pmat{\wdktwo\\\wdkthree}.
    }
  \end{subequations}
Consider initial conditions
$x(0)\in Z$ 
and $x_c(0)\in X_c$ satisfying the compatibility conditions $\mf{B}x(0)=\gd_c B_c^\ast x_c(0)+\wdistj[2](0)$ and $\mf{B}_dx(0)=\wdistj[3](0)$.
If we define $\Sigma(t) = \sum_{k=-q}^q e^{i\mu_k t}\Sigma_k\in Z$, then
\eq{
  \ddb{t}(x_e(t)-\Sigma(t))
  &= \mf{A}_e x_e(t)+ B_e \pmat{\wdistj[1](t)\\\yref(t)} - \sum_{k=-q}^q i\mu_k e^{i\mu_k t}\Sigma_k\\
  &= \mf{A}_e (x_e(t) - \Sigma (t))
}
due to~\eqref{eq:Sigmak1}. For all $t\geq 0$ we also have from~\eqref{eq:Sigmak2} that
\eq{
  \mf{B}_e(x_e(t)-\Sigma(t)) 
  = \pmat{\wdistj[2](t)\\\wdistj[3](t)} - \sum_{k=-q}^q e^{i\mu_k t} \mf{B}_e\Sigma_k=0.
}
 Thus $x_e(t)-\Sigma(t)\in \Dom(A_e)$ is a classical solution of the abstract Cauchy problem $\ddb{t} (x_e(t)-\Sigma(t))=A_e(x_e(t)-\Sigma(t)) $, and therefore  $\norm{x_e(t)-\Sigma(t)}=\norm{T_e(t)(x_e(0)-\Sigma(0))}\leq M_ee^{-\gw_e t} \norm{x_e(0)-\Sigma(0)}$.

If we write $\Sigma_k = 
\pmatsmall{\Pi_k\\\Gamma_k}
\in Z\times X_c$, then~\eqref{eq:Sigmak1} and the conditions~\eqref{eq:Gconds} imply 
\eq{
  & \pmat{i\mu_k-\mf{A}_0&0\\\gd_c B_c \mf{C}&i\mu_k-J_c}\pmat{\Pi_k\\\Gamma_k}= \pmat{B_d\wdkone\\\gd_c B_c \yrk} \\[1ex]
 \Rightarrow \quad &(i\mu_k-J_c)\Gamma_k = \gd_c B_c(\yrk-\mf{C}\Pi_k)\\
 \stackrel{\eqref{eq:Gconds1}}{\Rightarrow} \quad & B_c(\yrk-\mf{C}\Pi_k)=0 \quad
 \stackrel{\eqref{eq:Gconds2}}{\Rightarrow} \quad  \yrk=\mf{C}\Pi_k = \mf{C}_e\Sigma_k.
}
Using $\mf{C}_e\Sigma_k=\yrk$, we can write  $e(t)=y(t)-\yref(t)$
as
\eq{
  e(t)
  &= \mf{C}_ex_e(t) - \sum_{k=-q}^q \yrk e^{i\mu_kt} 
   = \mf{C}_ex_e(t) - \sum_{k=-q}^q \mf{C}_e\Sigma_k e^{i\mu_kt} \\
  &= \mf{C}_e(x_e(t) - \Sigma(t)).
}
Finally, since $\mf{C}_e A_e\inv\in \Lin(X,\C^p)$ for boundary control systems, we have
\eq{
  \norm{e(t)} 
  &=\norm{\mf{C}_e(x_e(t) - \Sigma(t))}
  \\ &
 =\norm{\mf{C}_eA_e\inv T_e(t)A_e(x_e(0) - \Sigma(0))}\\
  &\leq M_e e^{-\gw_e t}\norm{\mf{C}_eA_e\inv} \cdot \norm{A_e(x_e(0) - \Sigma(0))} 
}
and thus~$e^{\ga t}\norm{e(t)}\to 0$ as $t\to\infty$ for any $0<\ga <\gw_e$.

  Since the proof can be repeated analogously for any 
  perturbations of $(\mf{A}_0,\mf{B},\mf{B}_d,B_d,\mf{C})$ for which Assumption~\ref{ass:BCSass} is satisfied and the closed-loop semigroup is exponentially stable,
  the controller satisfies
   part (c) of the robust output regulation problem.
\end{proof}

\section{A Passive Robust Controller}
\label{sec:RORPcontroller}

In this section we prove that if the system~\eqref{eq:BCSplant} is exponentially stable and
the parameters of the controller~\eqref{eq:controllerintro} 
on $X_c =\C^{p(2q+1)}$
are chosen as (real) matrices $D_c=0$,
\begin{subequations}%
\label{eq:ContrparamsProofs}
\eqn{
J_c &= \blkdiag(J_c^0, J_c^1,\ldots, J_c^q), 
\\
J_c^0&=0_p, \quad J_c^k = \pmat{0&\gw_k I_p\\-\gw_k I_p&0} , \\
B_c & =  \pmat{B_c^0\\\vdots \\ B_c^q}, \qquad B_c^0=I_p, \quad B_c^k=\pmat{I_p\\0},
}
\end{subequations}
then the controller solves the robust output regulation problem for a range of gain parameters $\gd_c>0$.
The following theorem is the main abstract result of the paper,
  and it is also used in proving Theorem~\ref{thm:PHSmain} at the end of this section.

\begin{theorem}
  \label{thm:CLstabpert}
    Let $0=\gw_0<\gw_1<\cdots<\gw_q$. 
  Assume $A$ generates an exponentially stable semigroup $T(t)$, $C:= \mf{C}\vert_{\Dom(A)}$ is admissible with respect to $T(t)$, and Assumption~\textup{\ref{ass:Piwksur}} holds.
  Then there exists $\gd_c^\ast>0$ such that
  for all 
$\gd_c\in(0,\gd_c^\ast)$
the controller~\eqref{eq:controllerintro} 
on $X_c=\C^{p(2q+1)}$
with parameters~\eqref{eq:ContrparamsProofs} and $D_c=0$ solves the robust output regulation problem for all
$\yref(t)$ and $\wdist(t)$ in~\eqref{eq:yrefdist}.
\end{theorem}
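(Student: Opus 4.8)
The plan is to invoke Theorem~\ref{thm:IMP}: it is enough to check that the parameters~\eqref{eq:ContrparamsProofs} satisfy the internal model conditions~\eqref{eq:Gconds} and that the closed-loop generator $A_e=\mf{A}_e\vert_{\ker(\mf{B}_e)}$ is exponentially stable whenever $\gd_c\in(0,\gd_c^\ast)$ for a suitable threshold $\gd_c^\ast>0$. Granting these, Theorem~\ref{thm:IMP} delivers the tracking estimate, and since its hypotheses are stable under the admissible perturbations, also part~(c) of the robust output regulation problem.

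The internal model conditions follow directly from the block form of $J_c$ and $B_c$. Each block $B_c^k$ contains an $I_p$ factor, so $B_c$ is injective and~\eqref{eq:Gconds2} holds. For~\eqref{eq:Gconds1} one has $\gs(J_c)=\set{\pm i\gw_k}_{k=0}^q$, and the eigenspace $\ker(\pm i\gw_k-J_c)$ is carried by the $k$-th block; a direct computation with the eigenvectors shows that the only vector of $\ran(B_c)$ lying in $\ran(\pm i\gw_k-J_c)$ is $0$, which is exactly the $p$-copy internal model property built into~\eqref{eq:ContrparamsProofs}.

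By Proposition~\ref{prop:CLexistence} the operator $A_e$ generates a contraction semigroup on the Hilbert space $X_e$, so $\gs(A_e)\subset\set{\re\gl\le 0}$ and, by the Gearhart--Pr\"{u}ss--Huang theorem, exponential stability is equivalent to $i\R\subset\rho(A_e)$ together with a uniform bound $\sup_{\gw\in\R}\norm{R(i\gw,A_e)}<\infty$. I would first establish $i\R\subset\rho(A_e)$. At a resonant frequency $\gl=i\gw_k$ one has $i\gw_k\in\rho(A)$ by Assumption~\ref{ass:Piwksur}, so any $x_e=(x,x_c)\in\ker(i\gw_k-A_e)$ satisfies $(i\gw_k-\mf{A})x=0$, $\mf{B}x=\gd_c B_c^\ast x_c$, $\mf{C}x=P(i\gw_k)\mf{B}x$, and $(i\gw_k-J_c)x_c=-\gd_c B_c\mf{C}x$; since the right-hand side lies in $\ran(B_c)\cap\ran(i\gw_k-J_c)$, condition~\eqref{eq:Gconds1} forces $B_c P(i\gw_k)\mf{B}x=0$, and then injectivity of $B_c$ together with invertibility of $P(i\gw_k)$ (which holds because $\re P(i\gw_k)>0$) gives $\mf{B}x=0$, whence $x=0$ and, reading $\mf{B}x=\gd_c B_c^\ast x_c=0$ on the eigenspace of $J_c$, also $x_c=0$. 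At the non-resonant real frequencies $i\gw\in\rho(J_c)$, and the closed-loop resolvent can be built as in Proposition~\ref{prop:CLexistence} provided $Q_2(i\gw)=I+P_c(i\gw)P(i\gw)$ is invertible, which for small $\gd_c$ holds on compact non-resonant frequency ranges because $P_c(i\gw)=\gd_c^2 B_c^\ast R(i\gw,J_c)B_c$ is then $O(\gd_c^2)$; the high-frequency regime is treated together with the uniform bound below.

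The decisive step is the $\gw$-uniform resolvent bound, and this is where exponential stability of $T(t)$ and admissibility of $C$ do the work in place of well-posedness. The plant contribution to $R(i\gw,A_e)$ is governed by $\mf{C}R(i\gw,A)$, which is controlled through admissibility of $C$ with respect to the exponentially stable semigroup $T(t)$, while the controller contribution decays like $\norm{R(i\gw,J_c)}=O(1/\gw)$ as $\abs{\gw}\to\infty$. The genuine difficulty is that, without well-posedness, the transfer function $P(i\gw)$ need not be bounded on $i\R$, so the small-gain estimate on $Q_2(i\gw)$ and the overall bound must be organised so that the decay of $R(i\gw,J_c)$ (and the factor $\gd_c^2$) compensates any growth of $P(i\gw)$ at high frequencies; the residual neighbourhoods of the resonances are handled using the quantitative positivity $\re P(\pm i\gw_k)\ge\eps I$, which supplies the effective damping of the otherwise undamped controller modes. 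Choosing $\gd_c^\ast>0$ small enough that these three regimes --- the resonances, the intermediate band, and $\abs{\gw}\to\infty$ --- patch into a single finite bound completes the verification; Gearhart--Pr\"{u}ss then yields exponential stability, and Theorem~\ref{thm:IMP} finishes the proof.
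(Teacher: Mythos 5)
Your opening moves are correct and coincide with the paper's: $B_c$ is injective, the $p$-copy structure of $(J_c,B_c)$ yields~\eqref{eq:Gconds} (the paper gets this from \citel{HamPoh10}{Lem.~12} plus similarity), and Theorem~\ref{thm:IMP} reduces everything to exponential stability of $A_e$ for small $\gd_c$. The genuine gap is in the stability argument, at exactly the step you yourself call decisive. Your plan is to verify the Gearhart--Pr\"uss criterion by a small-gain estimate on $Q_2(i\gw)=I+P_c(i\gw)P(i\gw)$, letting the decay $\norm{P_c(i\gw)}=O(\gd_c^2/\abs{\gw})$ ``compensate any growth of $P(i\gw)$''. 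But the hypotheses of Theorem~\ref{thm:CLstabpert} --- impedance passivity, exponential stability of $A$, admissibility of $C$, and Assumption~\ref{ass:Piwksur} --- impose no pointwise growth bound whatsoever on $\gw\mapsto\norm{P(i\gw)}$; a bound of that type is essentially what well-posedness would supply, and the theorem is formulated precisely to avoid assuming it (the class~\eqref{eq:PHSintro} with $N=2$, e.g.\ Euler--Bernoulli models, contains non-well-posed systems). Passivity gives only $\re P(i\gw)\geq 0$ and constrains its growth in an integrated (Herglotz-measure) sense, which is compatible with $\norm{P(i\gw_n)}/\abs{\gw_n}\to\infty$ along sparse sequences $\gw_n\to\infty$; moreover $\re P_c(i\gw)=0$ because $J_c^\ast=-J_c$, so neither smallness nor positivity makes $Q_2(i\gw)$ uniformly invertible at high frequencies. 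Hence the high-frequency regime cannot be ``patched'', and no threshold $\gd_c^\ast$ emerges from your argument. A secondary gap: at the resonances you only exclude eigenvalues of $A_e$ at $\pm i\gw_k$; for the generator of a contraction semigroup on an infinite-dimensional space, boundary spectrum can be approximate point spectrum without eigenvalues, so injectivity of $i\gw_k-A_e$ alone does not give $i\gw_k\in\rho(A_e)$ --- a surjectivity (or Fredholm/resolvent-construction) argument is still required there.

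This is exactly why the paper does not argue in the frequency domain. Its proof is a time-domain Lyapunov argument: Lemma~\ref{lem:IMstabpert} produces $H\in\Lin(X_c,X)$ with $\ran(H)\subset Z$, $HJ_c=\mf{A}H$, $\mf{B}H=-B_c^\ast$ and $\mf{C}H$ bounded, and shows via \citel{HamPoh11}{App.~B} (this is where $\re P(\pm i\gw_k)>0$ enters) that $J_c+\gd_c^2B_c\mf{C}H$ is Hurwitz with a quantified Lyapunov matrix $P_{c0}$. Along closed-loop solutions the shifted state satisfies $x+\gd_c Hx_c\in\Dom(A)$, and one differentiates $V_e=\iprod{x+\gd_c Hx_c}{P(x+\gd_c Hx_c)}+\iprod{x_c}{P_cx_c}$ with $P=P_1+P_2$, where $P_1$ solves $A^\ast P_1+P_1A=-2I$ (exponential stability of $A$) and $P_2$ is the observation Gramian furnished by admissibility of $C$, i.e.\ $2\re\iprod{Ax_1}{P_2x_1}=-2\norm{Cx_1}^2$ on $\Dom(A)$. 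Note that admissibility is thus used in a quadratic-form, time-domain sense --- not, as in your sketch, to control $\mf{C}R(i\gw,A)$ pointwise --- and no bound on $P(i\gw)$ is ever needed: the coordinate shift by $H$ is precisely the cancellation that your factorization through $Q_2(i\gw)\inv$ fails to capture. The estimates then give $\dot{V}_e\leq-\eps_e V_e$ for all $\gd_c$ below an explicit threshold, which, together with contractivity of $T_e(t)$, yields exponential stability; as written, your proposal does not close this step.
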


The main part of the proof of Theorem~\ref{thm:CLstabpert} 
consists of showing the exponential stability of the closed-loop system
for $\gd_c\in(0,\gd_c^\ast)$, and for this we use a new Lyapunov argument.
Similar methods have been used in study of stability of coupled PHS especially in~\cite{RamLeG14,MacCal18}. Our situation is different from the previous references due to the fact that the infinite-dimensional system~\eqref{eq:BCSplant} is exponentially stable and the unstable controller~\eqref{eq:controllerintro}
  is finite-dimensional.  
The proof of
Theorem~\ref{thm:CLstabpert} begins with the definition of a component
$H\in \Lin(X_c,X)$ of the Lyapunov candidate function 
in 
Lemma~\ref{lem:IMstabpert}.
For the proofs we 
define a block-diagonal similarity transform $T=\blkdiag(T_0,T_1,\ldots,T_q)\in\C^{n_c\times n_c}$ where $n_c=p(2q+1)$ such that for $k\in \List{q}$ 
\eq{
  T_0=I_p, \qquad
  T_k = \pmat{I&I\\iI&-iI}, \quad T_k\inv = \frac{1}{2} \pmat{I&-iI\\I&iI}.
}
Moreover, we define $G_1 = T\inv J_c T\in \C^{p(2q+1)\times p(2q+1)}$ and $G_2 = T\inv B_c\in \C^{p\times p(2q+1)}$.
A direct computation shows that 
\eq{
  G_1  &= \blkdiag(i\gw_0 I_p,i\gw_1I_p,-i\gw_1 I_p,\ldots, i\gw_q I_p,-i\gw_q I_p)\\
  G_2 &= \frac{1}{2}\pmat{I_p,I_p,\ldots,I_p}^T .
}

\begin{lemma}
  \label{lem:IMstabpert}
  Let Assumption~\textup{\ref{ass:Piwksur}} hold and
  assume $A$ generates an exponentially stable semigroup on $X$.
Let $X_c=\C^{p(2q+1)}$ and let
$J_c$ and $B_c$ be as in~\eqref{eq:ContrparamsProofs}.
Then there exists 
$H\in \Lin(X_c,X)$ satisfying
$\ran(H)\subset Z$ such that  
\eqn{
  \label{eq:SylEqn}
HJ_c = \mf{A} H\qquad \mbox{and} \qquad \mf{B}H=-B_c^\ast ,
}
and we have $\mf{C}H\in \Lin(X_c,\C^p)$.
  Moreover, there exist constants $\gd_0^\ast ,M_c>0$ such that for any $\gd_c\in (0,\gd_0^\ast)$ we can choose $\P_{c0}>0$ such that $\norm{P_{c0}}\leq M_c$ and
  \eq{
    P_{c0}(J_c+ \gd_c^2 B_c \mf{C} H) + (J_c+ \gd_c^2 B_c \mf{C} H)^\ast P_{c0} = - \gd_c^2I.
  } 
\end{lemma}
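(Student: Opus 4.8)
The plan is to separate the construction of the intertwining operator $H$ from the solution of the finite-dimensional Lyapunov equation, and to use Assumption~\ref{ass:Piwksur} only at the point where the controller frequencies must be pushed off the imaginary axis. First I would build $H$ block-by-block from the transfer function. For each $\gl\in\set{0,\pm i\gw_1,\dots,\pm i\gw_q}\subset\rho(A)$ and $u\in\C^p$, \citel{CheMor03}{Thm.~2.9} provides a (unique) $E_\gl u\in Z$ with $(\gl-\mf{A})E_\gl u=0$ and $\mf{B}E_\gl u=u$, satisfying $\mf{C}E_\gl u=\P(\gl)u$; uniqueness holds because a difference would lie in $\Dom(A)=\ker(\mf{B})\cap\ker(\mf{B}_d)$ with $\gl\in\rho(A)$, and since $\C^p$ is finite-dimensional $E_\gl\in\Lin(\C^p,Z)$. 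In the block form dictated by~\eqref{eq:ContrparamsProofs} I would take the $J_c^0$-block of $H$ to be $-E_0$ and, for $k\ge1$, the two columns of the $J_c^k$-block to be $-\tfrac12(E_{i\gw_k}+E_{-i\gw_k})$ and $\tfrac{i}{2}(E_{i\gw_k}-E_{-i\gw_k})$. Substituting $\mf{A}E_{\pm i\gw_k}=\pm i\gw_k E_{\pm i\gw_k}$ verifies $HJ_c=\mf{A}H$ column-wise, and $\mf{B}E_\gl=I$ gives $\mf{B}H=-B_c^\ast$; since each block is a finite combination of the $E_\gl$ we obtain $\ran(H)\subset Z$ and $\mf{C}H\in\Lin(X_c,\C^p)$ for free, with block values $-\P(0)$, $-\tfrac12(\P(i\gw_k)+\P(-i\gw_k))$ and $\tfrac{i}{2}(\P(i\gw_k)-\P(-i\gw_k))$.

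Next, writing $N:=J_c+\gd_c^2 B_c\mf{C}H$, I would diagonalise the skew part via $T$: with $\mc{R}:=\mf{C}HT$ we get $T\inv N T=G_1+\gd_c^2 G_2\mc{R}$, where $G_1=\blkdiag(i\mu_k I_p)$ is skew-Hermitian. Reading off the previous step and $\mf{B}HT=-B_c^\ast T=-[I_p,\dots,I_p]$, the $k$-th block of $\mc{R}$ equals $-\P(i\mu_k)$, so the diagonal blocks of $\gd_c^2 G_2\mc{R}$ are $-\tfrac{\gd_c^2}{2}\P(i\mu_k)$, whose Hermitian part is $-\tfrac{\gd_c^2}{2}\re\P(i\mu_k)$. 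This is precisely where the strict positivity $\re\P(\pm i\gw_k)>0$ of Assumption~\ref{ass:Piwksur} enters: it forces each eigenvalue of the semisimple matrix $G_1$ to move strictly into the left half-plane, and first-order perturbation theory then yields $\re\gs(N)\le-c\gd_c^2$ for some $c>0$ and all small $\gd_c$.

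The hard part is not that $N$ is Hurwitz but that its decay rate is only of order $\gd_c^2$, so that transient growth from the large skew block $G_1$ could prevent $\norm{e^{Ns}}$ from being controlled uniformly by the spectral abscissa. To handle this I would kill the off-diagonal part of $G_2\mc{R}$ by a near-identity transformation $I+\gd_c^2 S$, whose off-diagonal blocks solve the scalar Sylvester relations $i(\mu_j-\mu_l)S_{jl}=-(G_2\mc{R})_{jl}$; these are uniquely and uniformly boundedly solvable exactly because the frequencies $\mu_{-q},\dots,\mu_q$ are distinct. After this transformation the matrix becomes $\blkdiag\bigl(i\mu_k I_p-\tfrac{\gd_c^2}{2}\P(i\mu_k)\bigr)+O(\gd_c^4)$, whose Hermitian part is $\le-c\gd_c^2 I$ for $\gd_c$ small, so the identity is a strict Lyapunov function for it and $\norm{e^{Ns}}\le M_0 e^{-c\gd_c^2 s}$ with $M_0$ independent of $\gd_c$ (collecting the uniformly bounded norms of $T$, $I+\gd_c^2 S$ and their inverses). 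Finally I would set $P_{c0}:=\gd_c^2\int_0^\infty e^{N^\ast s}e^{Ns}\,ds$, which is self-adjoint, positive definite and solves $P_{c0}N+N^\ast P_{c0}=-\gd_c^2 I$ exactly, while the decay estimate gives $\norm{P_{c0}}\le\gd_c^2\int_0^\infty M_0^2 e^{-2c\gd_c^2 s}\,ds=M_0^2/(2c)=:M_c$, a bound independent of $\gd_c\in(0,\gd_0^\ast)$. The cancellation of the prefactor $\gd_c^2$ against the slow rate $\gd_c^2$ in this integral is what makes the uniform bound possible and is the conceptual crux of the argument.
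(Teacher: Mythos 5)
Your proposal is correct, and its overall skeleton coincides with the paper's proof: you construct $H$ from the boundary-elliptic solutions provided by \citel{CheMor03}{Thm.~2.9} (your real block-coordinate construction is exactly the paper's $H$ read through the transform $T$, since $H_1^k\pm iH_2^k=-E_{\pm i\gw_k}$, giving $\mf{C}HT=-[P(i\gw_0),P(i\gw_1),P(-i\gw_1),\dots]$ as in the paper); you then establish a decay bound $\norm{e^{Nt}}\leq M_0e^{-c\gd_c^2t}$ for $N=J_c+\gd_c^2B_c\mf{C}H$ with $M_0$ \emph{independent} of $\gd_c$; and you finish with $P_{c0}=\gd_c^2\int_0^\infty e^{N^\ast s}e^{Ns}\,ds$, exploiting the same cancellation of the factor $\gd_c^2$ against the slow rate $\gd_c^2$ that the paper uses when it rescales $\tilde{P}_{c0}$. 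The genuine difference is the middle step: the paper does not prove the uniform decay estimate itself but imports it from the proof of Theorem~1 in \citel{HamPoh11}{App.~B}, after observing that $G_1^\ast-\gd_c^2K^\ast G_2^\ast$ has the form $A_c(\eps)$ treated there with $\eps=\gd_c^2/2$; you instead prove it directly, via the near-identity similarity transformation $I+\gd_c^2S$ whose off-diagonal blocks solve $i(\mu_j-\mu_l)S_{jl}=-(G_2\mc{R})_{jl}$ (solvable precisely because the $\mu_j$ are distinct), after which the transformed matrix is $\blkdiag\bigl(i\mu_jI_p-\tfrac{\gd_c^2}{2}P(i\mu_j)\bigr)+O(\gd_c^4)$ and Assumption~\ref{ass:Piwksur} makes its Hermitian part $\leq-c\gd_c^2I$, so the identity is a strict Lyapunov function and the prefactor collects only the $\gd_c$-uniform condition numbers of $T$ and $I+\gd_c^2S$. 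Your route buys a self-contained argument that makes transparent exactly where the strict positivity $\re P(\pm i\gw_k)>0$ and the distinctness of the frequencies enter; the paper's citation buys brevity and delegates the bookkeeping of the $O(\gd_c^4)$ remainder to the reference. One cosmetic point: your sentence invoking first-order perturbation theory to get $\re\gs(N)\leq-c\gd_c^2$ is never used (as you yourself note, the spectral abscissa alone cannot give a $\gd_c$-uniform transient bound), so it could simply be deleted.
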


\begin{proof}
Since $J_c = TG_1T\inv$, an operator $H\in \Lin(X_c,X)$ with $\ran(H)\subset Z$ satisfies~\eqref{eq:SylEqn}
if and only if
$HTG_1 = \mf{A} HT$ and $\mf{B}HT=-B_c^\ast T $. Due to the block-diagonal structure of $G_1$, 
the operator $HT$ has the form 
$HT=(H_0,H_1,H_{-1},\ldots,H_{q},H_{-q})$.
Since $B_c^\ast T =  \pmat{I,\ldots,I}$, 
for each $k\in \List[0]{q}$ the operators $H_{\pm k}:\C^p\to X$ are determined by $z_{\pm k}=H_{\pm k} y$ for all $y\in \C^p$ where $z_{\pm k}$ are the solutions of the abstact elliptic equations
  \eq{
    \begin{cases}
      (\pm i\gw_k-\mf{A})z_{\pm k}=0\\
      \mf{B}z_{\pm k}=- y.
    \end{cases}
  }
  By \citel{TucWei09book}{Prop. 10.1.2, Rem. 10.1.3 \& 10.1.5} the above equations have unique solutions and $H_{k}\in \Lin(\C^p,X)$ and $\ran(H_k)\subset Z$ for all $k\in \List[-q]{q}$. 
  Thus $H\in \Lin(X_c,X)$ and $\ran(H)\subset Z$. 
  We further have from~\citel{CheMor03}{Thm. 2.9} that  $\mf{C}H_{\pm k}y = \mf{C}z_{\pm k}=- P(\pm i\gw_k)y$ for all $y\in \C^p$ and $k\in \List[0]{q}$. Because of this, we have
\eq{
\mf{C} HT
&=-  \Bigl[P(i\gw_0),P(i\gw_1),P(-i\gw_1),\ldots,
P(i\gw_q),P(-i\gw_q)\Bigr],
}
which in particular implies $\mf{C}H\in \Lin(X_c,\C^p)$.

To prove the second claim, 
we first note that Assumption~\ref{ass:Piwksur} implies that $\re\gl>0$ for all 
$\gl\in\gs(P(\pm i\gw_k))$ and $k$. Indeed, if $k\in \List[0]{q}$ and  $\re\gl\leq 0$, then $\re P(\pm i\gw_k)>0$ implies 
\ieq{
\re (P(\pm i\gw_k)-\gl) =  \abs{\re \gl}+\re P(\pm i\gw_k)>0,
}
and thus $P(\pm i\gw_k)-\gl$ is nonsingular.

In the next step we use the results in~\citel{HamPoh11}{App.~B} to show that there exist constants
$M_0,\gw_0,\gd_0^\ast>0$ 
such that
  \eqn{
    \label{eq:IMstabbound}
    \norm{\exp( (J_c+\gd_c^2 B_c \mf{C} H)t)}\leq M_0 e^{-\gw_0 \gd_c^2 t}
  }
  for all
 $\gd_c\in (0,\gd_0^\ast)$ and
  $t\geq 0$.
  If we denote $K=-\mf{C} HT$,
  then
  \eq{
    J_c + \gd_c^2 B_c \mf{C}H
    = T(G_1-\gd_c^2 G_2 K)T\inv.
  }
  Now $K=[K_0,K_1,K_2,\ldots,K_{2q}]$ where $\re\gl<0$ for all $\gl\in\gs(K_k)$ and $k\in \List[0]{2q}$, and $G_2=\frac{1}{2}[I,\ldots,I]^T$. Thus $(G_1-\gd_c^2 G_2 K)^\ast = G_1^\ast - \gd_c^2 K^\ast G_2^\ast$ is 
of the form of $A_c(\eps)$ in~\cite[App. B]{HamPoh11} with $\eps=\gd_c^2/2$.
The proof of Theorem~1 in~\cite[App. B]{HamPoh11} shows that 
there exist $M_1,\gw_0,\gd_0^\ast>0$
such that 
$\norm{\exp((G_1^\ast - \gd_c^2 K^\ast G_2^\ast)t)}\leq M_1 e^{-\gw_0 \gd_c^2 t}$ for all
$\gd_c\in (0,\gd_0^\ast)$ and
$t\geq 0$.
This further implies that if we define $M_0=M_1\norm{T}\norm{T\inv}$, then~\eqref{eq:IMstabbound} holds for all $\gd_c\in (0,\gd_0^\ast)$ and $t\geq 0$.

Let $\gd_c\in (0,\gd_0^\ast)$ and denote $T_{\gd_c}(t)=\exp( (J_c+\gd_c^2 B_c \mf{C} H)t)$ for brevity.
  Since $J_c+\gd_c^2 B_c \mf{C} H$ is Hurwitz, we can choose $\tilde{P}_{c0}>0$ such that
  \eq{
    (J_c+\gd_c^2 B_c \mf{C} H)\tilde{P}_{c0}  +(J_c+\gd_c^2 B_c \mf{C} H)^\ast \tilde{P}_{c0} = -I.
  }
 Here $\tilde{P}_{c0}=\int_0^\infty T_{\gd_c}(t)^\ast T_{\gd_c}(t)dt$, and thus~\eqref{eq:IMstabbound} implies
  \eq{
    \norm{\tilde{P}_{c0}}\leq \int_0^\infty \norm{T_{\gd_c}(t)}^2 dt
    \leq M_0^2 \int_0^\infty e^{-2\gw_0 \gd_c^2 t}dt
    =  \frac{M_0^2}{2\gw_0 \gd_c^2 }.
  }
  Now the matrix $P_{c0} := \gd_c^2 \tilde{P}_{c0}$ has the required properties.
\end{proof}

\textit{Proof of Theorem~\textup{\ref{thm:CLstabpert}}.}
  The proof of~\citel{HamPoh10}{Lem.~12} shows that $\ran(\pm i\gw_k-G_1)\cap \ran(G_2)=\set{0}$ for all $k\in \List[0]{q}$ and $\ker(G_2)=\set{0}$, and by similarity the pair $(J_c,B_c) = (TG_1T\inv,TG_2)$ satisfies the conditions~\eqref{eq:Gconds}. 
By Theorem~\ref{thm:IMP}
  it is thus sufficient to show that the closed-loop system is exponentially stable 
(in the case $\yref(t)\equiv 0$ and $\wdist(t)\equiv 0$).

Let $H\in \Lin(X_c,X)$ and $\gd_0^\ast,M_c>0$
be as in  Lemma~\ref{lem:IMstabpert}, and let $\gd_c\in(0,\gd_0^\ast)$.
We choose
the Lyapunov function candidate $V_e$ for the closed-loop system by 
\eq{
V_e 
&= \iprod{x+\gd_c Hx_c}{\P (x+\gd_c Hx_c)}_X + \iprod{x_c}{\P_c x_c}_{X_c}
}
where $x=x(t)$  and $x_c=x_c(t)$ are the states of the plant and the controller, respectively, and $P$ and $P_c$ will be chosen later. Since the coordinate transform $(x,x_c)\to (x+\gd_c Hx_c,x_c)$ is boundedly invertible, $V_e$ is a valid Lyapunov function candidate whenever $P>0$ and $P_c>0$.

Let $(x(t),x_c(t))^T$ be a classical solution of the closed-loop system with $\yref(t)\equiv 0$ and $\wdist(t)\equiv 0$. 
Since $\mf{B}x(t)=\gd_c B_c^\ast x_c(t)$ and $\mf{B}H=-B_c^\ast$, we have $\mf{B}(x(t)+\gd_c Hx_c(t))=0$. Thus $x(t)+\gd_c Hx_c(t)\in \ker(\mf{B})=\Dom(A) $ and $\mf{A}(x(t)+\gd_c Hx_c(t))=A(x(t)+\gd_c Hx_c(t))$.
If we denote $\tilde{A}=A -\gd_c^2 HB_c C: \Dom(A)\subset X\to X$, then a direct computation using~\eqref{eq:SylEqn} shows that
  \eq{
    \MoveEqLeft[1]\frac{1}{2}\dot{V}_e 
    = \re\iprod{\dot{x}+\gd_c H\dot{x}_c}{\P (x+\gd_c Hx_c)} + \re\iprod{\dot{x}_c}{\P_c x_c} \\
    &= \re\iprod{\mf{A}x+\gd_c HJ_c x_c - \gd_c^2 HB_c \mf{C} x }{\P (x+\gd_c Hx_c)}\\
    &\quad  + \re\iprod{J_c x_c - \gd_c B_c \mf{C} x }{\P_c x_c} \\
   &= \re\iprod{\tilde{A}(x+\gd_c Hx_c)  }{\P (x+\gd_c Hx_c)}
   + \re\iprod{(J_c  + \gd_c^2 B_c \mf{C} H) x_c}{\P_c x_c} \\
   &\quad +\re\iprod{\gd_c^2 B_c \mf{C} Hx_c}{\gd_c H^\ast \P (x+\gd_c Hx_c)}
   \\
   & \quad - \re\iprod{  C (x+\gd_c Hx_c) }{\gd_c B_c^\ast\P_c x_c} .
  }

  Since $A$ generates an exponentially stable semigroup $T(t)$ on $X$, there exists a unique $P_1\in \Lin(X)$ with $P_1>0$ such that $A^\ast P_1 + P_1A = -2I$.
  Moreover, the exponential stability also implies that $C$ is infinite-time admissible with respect to $T(t)$, and by~\citel{TucWei09book}{Thm. 5.1.1} there exists $P_2\in \Lin(X)$ with $P_2\geq 0$ such that $2\re \iprod{Ax_1}{P_2x_1}=-2 \norm{Cx_1}^2$ for all $x_1\in \Dom(A)$.
  Thus if we define $P=P_1+P_2\in \Lin(X)$, then $P>0$ and 
\eq{
2\re \iprod{A x_1}{Px_1}= -2 \norm{x_1}^2-2 \norm{C x_1}^2 \quad\; \forall x_1\in \Dom(A).
}
  The scalar inequality $2ab\leq a^2+b^2$ implies that if $x_1\in \Dom(A)$, then
\eq{
   2\re \iprod{\tilde{A} x_1}{Px_1}
&=2\re \iprod{A x_1}{Px_1} - 2\gd_c^2 \re \iprod{C x_1}{B_c^\ast H^\ast Px_1}\\
&\leq -2 \norm{x_1}^2-2 \norm{C x_1}^2 + \gd_c^2 \norm{C x_1}^2 + \gd_c^2 \norm{PHB_c}^2 \norm{x_1}^2\\
&= -(2-\gd_c^2 \norm{PHB_c}^2) \norm{x_1}^2-(2-\gd_c^2) \norm{C x_1}^2 \\
&\leq - \norm{x_1}^2- \norm{C x_1}^2 
}
whenever
$0<\gd_c\leq \gd_1^\ast$ with  $\gd_1^\ast:=\min \set{1,1/\norm{PHB_c}}>0$.

  Since $\gd_c\in(0,\gd_0^\ast)$ by assumption, 
  we can choose $P_{c0}>0$ (corresponding to this $\gd_c$) 
  as in 
  Lemma~\ref{lem:IMstabpert} and
  define $\P_c = \eps_c \P_{c0}>0$ for some $\eps_c>0$. 
  Then
  $\norm{\P_c}\leq M_c \eps_c$ and 
  \eq{
    \P_c(J_c + \gd_c^2 B_c \mf{C} H) + (J_c + \gd_c^2 B_c \mf{C} H)^\ast \P_c = -\eps_c\gd_c^2I.
  }
If $0<\gd_c<\min \set{\gd_0^\ast,\gd_1^\ast}$, we can estimate (using the inequality 
$2\re \iprod{z_1}{z_2}\leq 2 \norm{z_1}\norm{z_2}\leq \frac{1}{2} \norm{z_1}^2 + 2 \norm{z_2}^2$ 
in the last term)
\eq{
\dot{V}_e
 &  = 2\re\iprod{\tilde{A}(x+\gd_c Hx_c)  }{\P (x+\gd_c Hx_c)}
     + 2\re\iprod{(J_c  +\gd_c^2 B_c \mf{C} H) x_c}{\P_c x_c} \\
   &\quad +2 \re\iprod{\gd_c^2 B_c \mf{C} Hx_c}{\gd_c H^\ast \P (x+\gd_c Hx_c)}\\
   &\quad  - 2\re\iprod{  C (x+\gd_c Hx_c) }{\gd_c B_c^\ast\P_c x_c} 
   \\
&\leq
- \norm{x+\gd_c Hx_c}^2- \norm{C (x+\gd_c Hx_c)}^2  
      - \eps_c\gd_c^2  \norm{x_c}^2
    + \gd_c^4\norm{B_c \mf{C} H x_c}^2 \\
&\quad 
     + \gd_c^2 \norm{H^\ast \P(x+\gd_c Hx_c)}^2
 + \frac{1}{2} \norm{C (x+\gd_c Hx_c)}^2 + 2 \gd_c^2 \norm{B_c^\ast \P_c x_c}^2\\
    &= \Bigl[ -1 + \gd_c^2 \norm{\P H}^2  \Bigr] \norm{x+\gd_c Hx_c}^2 -\frac{1}{2} \norm{C(x+\gd_c Hx_c)}^2
    \\
    &\quad + \gd_c^2\Bigl[ -\eps_c + \gd_c^2 \norm{B_c \mf{C} H}^2 + 2 M_c^2 \eps_c^2 
      \norm{B_c}^2
    \Bigr]\norm{x_c}^2 .
}
   We can now choose a sufficiently small fixed $\eps_c>0$ and $\gd_2^\ast >0$ such that  if  $0<\gd_c< \gd_c^\ast :=\min \set{\gd_0^\ast,\gd_1^\ast,\gd_2^\ast}$, then
  \eq{
  \dot{V}_e
   \leq -\tilde{\eps}_e \left( \norm{x+\gd_c Hx_c}^2+ \norm{x_c}^2 \right)
   \leq -\tilde{\eps}_e \max \set{\norm{P\inv},\norm{P_c\inv}}V_e
  = : -\eps_e V_e,
  }
  where
  $\eps_e>0$ depends on the choice of $\gd_c>0$.
    Since $T_e(t)$ is contractive, this proves exponential closed-loop stability.
\hfill$\square$

We now present the proof of Theorem~\ref{thm:PHSmain} for PHS.
To use Theorem~\ref{thm:CLstabpert} we formulate~\eqref{eq:PHSintro} as a boundary control system on $X=\Lp[2](a,b ;\C^n)$  with norm defined by $\norm{x}_{\mc{H}}=\sqrt{\iprod{\mc{H}x}{x}_{\Lp[2]}}$ for $x\in X$ (since
$(P_2,P_1,P_0,G_0,\mc{H},W,\tilde{W})$ are real,
real-valued initial data for~\eqref{eq:PHSintro} and~\eqref{eq:controllerintro} leads to real-valued solutions).
We begin by showing that the condition (b) in Assumption~\ref{ass:PHSW} implies impedance passivity of~\eqref{eq:PHSintro}.

\begin{lemma}
  \label{lem:PHSpassiveCond}
  If Assumption~\textup{\ref{ass:PHSW}} holds
and $\wdist(t)\equiv 0$, then the classical solutions of~\eqref{eq:PHSintro} satisfy
\ieq{
\frac{1}{2}\ddb{t}\norm{x(t)}_{\mc{H}}^2 \leq u(t)^T y(t).
}
\end{lemma}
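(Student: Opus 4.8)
The plan is to rewrite $\frac{1}{2}\ddb{t}\norm{x}_{\mc H}^2$ as a boundary power balance and then feed it into Assumption~\ref{ass:PHSW}(b). Writing $e:=\mc{H}x$ for the effort variable and using that $\mc H$ is symmetric and time-independent together with $\wdist(t)\equiv 0$ in~\eqref{eq:PHSintro1}, I would first obtain
\[
  \frac{1}{2}\ddb{t}\norm{x(t)}_{\mc H}^2
  = \iprod{\mc{H}x}{\partial_t x}_{\Lp[2]}
  = \int_a^b e^T\bigl[P_2\partial_z^2 e + P_1\partial_z e + (P_0-G_0)e\bigr]\,dz .
\]

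Next I would reduce the right-hand side to boundary terms by integration by parts, exploiting the symmetry hypotheses on the $P_i$. The zeroth-order part splits into a skew piece that vanishes, since $\int_a^b e^T P_0 e\,dz=0$ for $P_0=-P_0^T$, and a dissipative piece $-\int_a^b e^T G_0 e\,dz\le 0$ coming from $G_0=G_0^T\ge 0$. Integrating the first-order term once and using $P_1=P_1^T$ leaves only $\frac{1}{2}[e^T P_1 e]_a^b$, while integrating the second-order term once and using $P_2=-P_2^T$ annihilates the interior integral $\int_a^b(\partial_z e)^T P_2\partial_z e\,dz=0$ and leaves $[e^T P_2\partial_z e]_a^b$. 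Since $Q$ is symmetric (a consequence of $P_1=P_1^T$ and $P_2=-P_2^T$), a direct computation from Definition~\ref{def:BPvariables} with $R_{ext}=\frac{1}{\sqrt2}\pmat{Q&-Q\\I&I}$ identifies the collected boundary terms precisely as $e_\partial^T f_\partial$; this is the standard PHS power balance~\cite{LeGZwa05,JacZwa12book}, and the case $N=1$ is analogous and simpler. Hence
\[
  \frac{1}{2}\ddb{t}\norm{x(t)}_{\mc H}^2
  = e_\partial^T f_\partial - \int_a^b e^T G_0 e\,dz
  \le e_\partial^T f_\partial .
\]

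It then remains to dominate $e_\partial^T f_\partial$ by $u^Ty$. With $\wdist(t)\equiv 0$ the boundary relations~\eqref{eq:PHSintroBCs} read $W_1\pmat{f_\partial\\e_\partial}=u$, $W_2\pmat{f_\partial\\e_\partial}=0$ and $y=\tilde W\pmat{f_\partial\\e_\partial}$, so $g:=\pmat{f_\partial\\e_\partial}\in\ker(W_2)$ and Assumption~\ref{ass:PHSW}(b) applies to this $g$. Expanding the quadratic form with $g^TW_1^T\tilde W g=u^Ty$, $g^T\tilde W^T W_1 g=y^Tu=u^Ty$, and $g^T\Sigma g=f_\partial^T e_\partial+e_\partial^T f_\partial=2\,e_\partial^T f_\partial$ turns the inequality $\iprod{(W_1^T\tilde W+\tilde W^T W_1-\Sigma)g}{g}\ge 0$ into $2u^Ty-2\,e_\partial^T f_\partial\ge 0$, that is $e_\partial^T f_\partial\le u^Ty$. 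Combining this with the previous display gives the claim.

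I expect the only delicate step to be the middle one: carrying out the integration by parts carefully and checking that the resulting boundary quadratic form coincides with $e_\partial^T f_\partial$ under the definition of $R_{ext}$ and $Q$, in both the second-order ($N=2$) and first-order ($N=1$) cases. Once this power-balance identity is established, passing from $e_\partial^T f_\partial$ to $u^Ty$ is only the routine use of Assumption~\ref{ass:PHSW}(b) indicated above.
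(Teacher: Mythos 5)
Your proof is correct and takes essentially the same approach as the paper: the paper obtains the power balance $\frac{1}{2}\ddb{t}\norm{x(t)}_{\mc{H}}^2 \leq \frac{1}{2}\left(f_\partial^T,\, e_\partial^T\right)\Sigma\left(f_\partial^T,\, e_\partial^T\right)^T = f_\partial^T e_\partial$ by citing the proof of \citel{LeGZwa05}{Thm.~4.2}, which you instead re-derive by integration by parts. Your second step---applying Assumption~\ref{ass:PHSW}(b) to $g=(f_\partial,e_\partial)^T\in\ker(W_2)$ with $W_1g=u$ and $\tilde{W}g=y$ to pass from $f_\partial^T e_\partial$ to $u^Ty$---is exactly the paper's argument.
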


\begin{proof}
Let $\wdist(t)\equiv 0$.
The proof of 
\citel{LeGZwa05}{Thm.~4.2} and (b) imply that the solution of~\eqref{eq:PHSintro} satisfies
\eq{
\MoveEqLeft[1]\frac{1}{2}\ddb{t}\int_a^b x(z,t)^T \mathcal{H}(z) x(z,t)dz
  \leq
  \frac{1}{2} \pmat{f_\partial(t)\\e_\partial (t)}^T \Sigma \pmat{f_\partial(t)\\e_\partial (t)}\\
  &\leq\frac{1}{2} \pmat{f_\partial(t)\\e_\partial (t)}^T (W_1^T \tilde{W}+ \tilde{W}^T W_1) \pmat{f_\partial(t)\\e_\partial (t)}
  =
  y(t)^T u(t)
}
where we have used  that $ \pmatsmall{f_\partial(t)\\ e_\partial(t)} \in\ker( W_2)$ by~\eqref{eq:PHSintroBCs}.
\end{proof}

As shown in~\citel{LeGZwa05}{Sec. 4--5}, \eqref{eq:PHSintro} becomes a boundary control system~\eqref{eq:BCSplant} on $X$  
with choices
  \eq{
    \mf{A}_0 x&:=
    P_2\frac{\partial^2 }{\partial z^2}\big({\mathcal H}x\big)
    +P_1\frac{\partial }{\partial z}\big({\mathcal H}x\big)
    +(P_0-G_0)\left({\mathcal H}x\right)\\
    \Dom(\mf{A}_0)&= Z:= \setm{x\in \Lp[2](a,b;\C^n)}{\mc{H}x\in H^N(a,b;\C^n)}\\
    \mf{B}x& = W_1R_{ext}\Phi(\mc{H}x), \qquad 
    \mf{B}_dx = W_2R_{ext}\Phi(\mc{H}x)\\
    \mf{C}x& = \tilde{W} R_{ext}\Phi(\mc{H}x), \qquad 
    B_dv = B_d(\cdot)v
  }
  where $R_{ext}$ and $\Phi(\cdot)$ are as in Definition~\ref{def:BPvariables}.
  For these
   definitions 
the properties in Assumption~\ref{ass:BCSass} follow from~\citel{LeGZwa05}{Thm.~4.2} and Lemma~\ref{lem:PHSpassiveCond}. 

\begin{proof}[Proof of Theorem~\textup{\ref{thm:PHSmain}}]
  To apply Theorem~\ref{thm:CLstabpert} we rewrite the feedthrough $D_c>0$ 
  as in~\eqref{eq:DcRewrite}, in which case the boundary control system has the input operator $\mf{B}+D_c \mf{C}$ and the controller~\eqref{eq:controllerintro} has no feedthrough.
  This corresponds to preliminary output feedback $u(t)=-D_c y(t)+\tilde{u}(t)$.
  Denote by $A_{D_c}=\mf{A}\vert_{\ker(\mf{B}+D_c \mf{C})}$ with $\Dom(A_{D_c})=\ker(\mf{B}+D_c \mf{C})$.

  By Lemma~\ref{lem:PHSpassiveCond}, the original system is impedance passive, and since $D_c>0$, the output feedback preserves impedance passivity. 
  The operator  $A_{D_c}$ is dissipative, and straightforward perturbation arguments (similar to those in the proof of Proposition~\ref{prop:CLexistence})  show that $\ran(1-A_{D_c})=X$. 
  Thus $A_{D_c}$ generates a contraction semigroup by the Lumer--Phillips Theorem and
  this semigroup is exponentially stable
  by Assumption~\ref{ass:PHSstab} 
  (with $K=D_c$).
  As shown  in~\citel{HumKur19}{Prop.~II.4}), $C = \mf{C}\vert_{\Dom(A_{D_c})}$ is admissible with respect to the semigroup generated by $A_{D_c}$.

Finally, we need to verify Assumption~\ref{ass:Piwksur}, i.e., that
the transfer function $P_{D_c}(\gl)$ of~\eqref{eq:PHSintro} with feedback $u(t)=-D_c y(t)+\tilde{u}(t)$ satisfies $\re P_{D_c}(\pm i\gw_k)>0$ for all $k$. Define $K_0=\frac{1}{2}D_c>0$ and denote the transfer function of~\eqref{eq:PHSintro}
with output feedback $u(t)=-K_0 y(t)+\tilde{u}(t)$ by
$P_{K_0}(\gl)$.
By Assumption~\ref{ass:PHSstab} $P_{K_0}(\gl)\in \R^{p\times p}$ is well-defined for $\gl\in \set{\pm i\gw_k}_{k=0}^q$, and since~\eqref{eq:PHSintro} has no transmission zeros at $\pm i\gw_k$, $P_{K_0}(\pm i\gw_k)$ are nonsingular for all $k$.
Since $D_c=K_0+K_0$, we have
$P_{D_c}(\pm i\gw_k)
=P_{K_0}(\pm i\gw_k)(I+K_0 P_{K_0}(\pm i\gw_k))\inv
=(P_{K_0}(\pm i\gw_k)\inv+K_0)\inv $ for all $k$. Since $\re (P_{K_0}(\pm i\gw_k)\inv) + K_0>0$, it is easy to show that Assumption~\ref{ass:Piwksur} holds.
The claims now follow from Theorem~\ref{thm:CLstabpert}.
\end{proof}

\section{Application to Atomic Force Microscopy}
\label{sec:example}

As application example we consider the output tracking trajectory problem for a  piezo actuated tube used in positioning systems for Atomic Force Microscopy (see Figure~\ref{fig:AFM} (left)).
\begin{figure}[h!]
\begin{center}
\includegraphics[width=.35\linewidth]{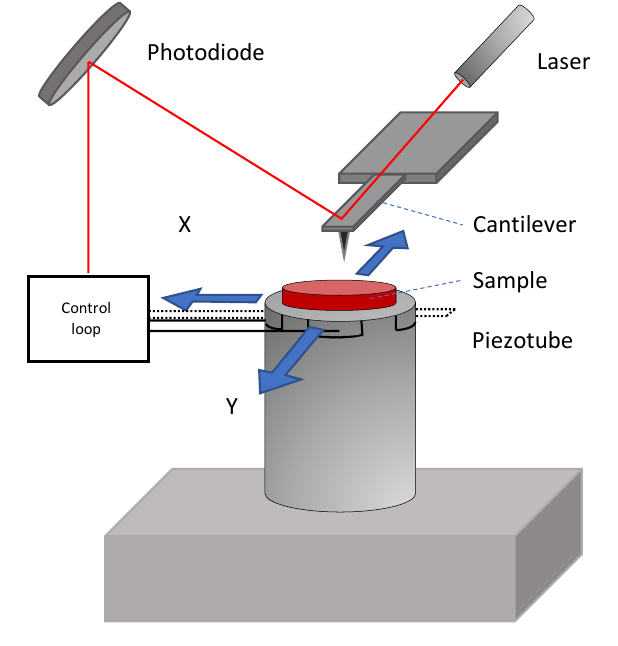}
\hspace{1cm}
 \includegraphics[width=.35\linewidth]{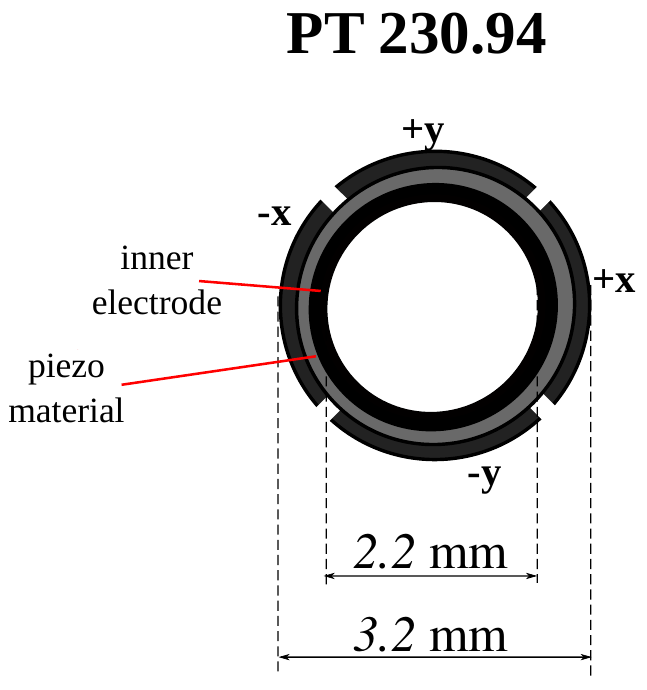}
\caption{Atomic Force Microscopy (left). The piezoelectric tube (right). }
\label{fig:AFM}
\end{center}
\end{figure}

This actuator provides the high positioning resolution and the large bandwidth necessary for the trajectory control during scanning processes. The active part situated at the tip of the flexible tube is composed of three concentric layers: piezo material in between two cylindric electrodes (Figure~\ref{fig:AFM} (right)). The deformation of the active material subject to an external voltage results in an torque applied at the extremity of the tube.   

We consider the motion of the tube in one direction. In this case the structure of the system behaves as a clamped-free beam, represented by the Timoshenko beam model and actuated through boundary control stemming from the piezoelectric action at the tip of the beam. By choosing as state variables the energy variables, namely the shear displacement $x_1(t)=\frac{\partial w}{\partial z}(\cdot,t)-\phi(\cdot,t)$, the transverse momentum distribution $x_2(t)=\rho\frac{\partial w}{\partial t}(\cdot,t)$, the angular displacement $x_3(t)=\frac{\partial \phi}{\partial z}(\cdot,t)$ and the angular momentum distribution $x_4(t)=I_\rho\frac{\partial \phi}{\partial t}(\cdot,t)$
for $t\geq 0$, where $w(z,t)$ is the transverse displacement and $\phi(z,t)$ the rotation angle of the beam, the port-Hamiltonian model of the uncontrolled Timoshenko beam has the form~\eqref{eq:PHSintro1}--\eqref{eq:PHSintro2}
with $\mathcal{H}(\cdot) \equiv \diag \left(K,\frac{1}{\rho},EI, \frac{1}{I_\rho}\right)\in\R^4$,
\begin{equation*}
P_1 = \left[
\begin{array}{c c c c}
0 & 1 & 0 & 0\\
1 & 0 & 0 & 0\\
0 & 0 & 0 & 1\\
0 & 0 & 1 & 0
\end{array}\right],
\qquad
 P_0 = \left[
\begin{array}{c c c c}
0 & 0 & 0 & -1\\
0 & 0 & 0 & 0\\
0 & 0 & 0 & 0\\
1 & 0 & 0 & 0
\end{array}\right]\; 
\end{equation*}
and $G_0=\diag(0,b_w,0,b_\phi)$~\cite{LeGZwa05}.  
Here $\rho$, $I_\rho$, $E$, $I$ and $K$ are the mass per unit length, the angular moment of inertia of a cross section, Young's modulus of elasticity, the moment of inertia of a cross section, and the shear modulus respectively, $b_{w}, b_{\phi}$ the frictious coefficients. 
From Definition \ref{def:BPvariables} considering that $N=1$ and  $Q=P_1$ we get
\[
  \pmat{f_\partial(t) \\ e_\partial(t)} 
=\frac{1}{\sqrt{2}}
\pmat{
  \frac{\partial w}{\partial t}(b)- \frac{\partial w}{\partial t}(a)
  \\
  K\left(\frac{\partial w}{\partial z}(b)-\phi(b)\right)-K\left(\frac{\partial w}{\partial z}(a)-\phi(a)\right)
  \\
  \frac{\partial \phi}{\partial t}(b)-\frac{\partial \phi}{\partial t}(a)
  \\
  EI \frac{\partial \phi}{\partial z}(b)-EI \frac{\partial \phi}{\partial z}(a)
  \\
  K\left(\frac{\partial w}{\partial z}(b)-\phi(b)\right)+K\left(\frac{\partial w}{\partial z}(a)-\phi(a)\right) 
  \\
  \frac{\partial w}{\partial t}(b)+ \frac{\partial w}{\partial t}(a)
  \\
  EI \frac{\partial \phi}{\partial z}(b)+EI \frac{\partial \phi}{\partial z}(a)
  \\
  \frac{\partial \phi}{\partial t}(b)+\frac{\partial \phi}{\partial t}(a)
}
\]

The beam is clamped at point $a$, i.e., $\frac{1}{\rho}x_{2}(a,t)=\frac{1}{I_{\rho}}x_{4}(a,t)=0$ for $t\geq0$ and free/actuated at point $b$, i.e., $Kx_{1}(b,t)=0$ and $EIx_{3}(b,t)=u(t)$ for $ t\geq0$. The angular velocity $\frac{\partial \phi}{\partial t}(b,t)$ at the tip of the beam is measured. The input and output of the system are then of the form~\eqref{eq:PHSintro} with
\eq{
  W_1&=\frac{1}{\sqrt{2}} 
  \pmat{ 0 &0 &0 &1 &0 &0 &1 &0 }\\
  W_2&=\frac{1}{\sqrt{2}}  
  \pmat{
    0 &1 &0 &0 &1 &0 &0 &0 \\
    -1 &0 &0 &0 &0 &1 &0 &0 \\
    0 &0 &-1 &0 &0 &0 &0 &1 
  }\\
  \tilde{W}&= \frac{1}{\sqrt{2}}
  \pmat{
    0 &0 &1 &0 &0 &0 &0 &1
  }
}
%
The matrix $W:=  \pmatsmall{W_1\\ W_2 }$ has full rank and $W \Sigma W^T =0$. Furthermore $\iprod{(W_1^T \tilde{W}+ \tilde{W}^T W_1- \Sigma)g}{g}= 0$ for all $g\in\ker(W_2)$, 
the system is then impedance passive satisfying Assumption~\ref{ass:PHSW}. 
  The system is also exponentially stable and Assumption~\ref{ass:PHSstab} holds.
From Proposition~\ref{prop:CLexistence} the closed loop system has a solution and the regulation error is well defined.

We now build a controller to achieve the robust output tracking for the Piezoelectric tube model.
We use the numerical values given in Table \ref{table1} to achieve a realistic approximation of the dynamics of the piezo actuated tube.

\begin{table}[h!]
\small
\center
\begin{tabular}{|p{2.9cm}|p{3cm}|p{2cm}|l|}
\hline
 Beam's parameters& Value & Simulation parameters & Value \\
\hline
Beam length & 5 cm &$N_f$ & 50 \\
Beam width & 0.3 cm &$a$ & $200$ cm/s \\
Beam thickness &0.2 cm  &$b$ &$100$ cm/s  \\
Material Density & 936  kg/m$^3$
&$c$&0.2 N/m\\
Young's modulus & 4.14 GPa &$\theta$ &0.6\\
Transverse diss.  & $10^{-4}$ N$\cdot$s/m &
 $\gw_1$ & 10 rad/s \\              
coef. & & 
 $\gw_2$ & 15 rad/s\\
Rotational diss.  & $10^{-4}$ N$\cdot$m$\cdot$s/rad 
 &$\gw_3$ &50 rad/s\\
coef. & &
$D_c$ & $0.002$\\ 
&&$\gd_c$ & $0.2$\\ 
\hline
\end{tabular}
\medskip
\caption{\noindent Simulation parameters.\label{table1}}
\end{table}

For the tracking we consider the reference signal
\eq{
  \yref(t) = a\sin(\gw_1 t) + b\cos(\gw_2 t), \qquad a,b\in\R\setminus \set{0}.
}
with two pairs of frequencies $\pm \gw_k$ where $\gw_i>0$, $k\in \left\{1,2\right\}$. 
As an input disturbance signal we consider 50 Hz AC noise coming from the electrical network, hence $w_{dist,2}(t) = c\sin(2\pi50 t + \theta)$ with unknown $c\in\R$ and $\theta\in [0,2\pi]$.
Since the piezo-actuated tube is a single-input single-output system, we can use a controller of the form (with $e(t)=\yref(t)-y(t)$)
 \eq{
   \dot{x}_c(t) &\hspace{-.3ex}=\hspace{-.3ex} \hspace{-.3ex}
\pmat{
0&\gw_1&0&0&0&0\\-\gw_1&0&0&0&0&0\\
0&0&0&\gw_2&0&0\\0&0&-\gw_1&0&0&0\\
0&0&0&0&0&\gw_3\\0&0&0&0&-\gw_3&0\\
}
\hspace{-.3ex}x_c(t) +\hspace{-.3ex} \pmat{\gd_c\\0\\\gd_c\\0\\\gd_c\\0}\hspace{-.3ex}
e(t)
  \\
  u(t)&= \gd_c\pmat{1&0&1&0&1&0}x_c(t)
  + D_c e(t)
}
on $X_c = \R^6$.
%
%
%
%
By Theorem~\ref{thm:PHSmain} 
 the controller achieves asymptotic output tracking of the reference signal $\yref(t)$
if 
 $i\gw_1$, $i\gw_2$, and $i\gw_3$ are not transmission zeros of the system,
 if $D_c>0$,
 and if $\gd_c>0$ is sufficiently small.


\begin{figure}[h!]
\begin{center}
\includegraphics[width=.8\linewidth]{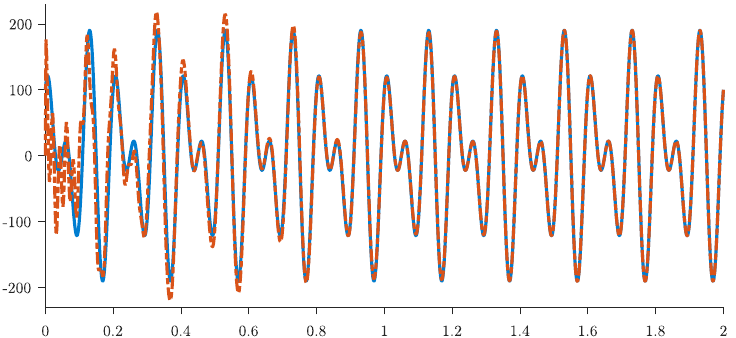}
\caption{Simulation results. 
The controlled output $y(t)$ (dashed red line) and the reference $\yref(t)$ (solid blue line).
}
\label{fig:simu}
\end{center}
\end{figure}


For simulation the Timoshenko beam model was discretized using a structure preserving method based on the Mixed Finite Element Method  \cite{Golo04,Baaiu09JPC}. 
We denote by $N_f$ the number of basis elements, and consequently the full finite dimensional system has order $4N_f$. 
All the numerical values of the parameters related to the simulation can be found in table \ref{table1}. 
Figure \ref{fig:simu} depicts the output tracking performance for the zero initial states of the system and the controller, and exhibits steady convergence of the tracking error to zero.
Due to robustness the output tracking is achieved even if the physical parameters of the piezo actuated tube model 
contain uncertainties or experience changes, as long as the closed-loop system stability is preserved.


\section{Conclusions}
\label{sec:Conclusions}

In this paper we have proposed a constructive method for the design of impedance passive controllers for robust output regulation of port-Hamil\-tonian systems with boundary control and observation. Our results use Lyapunov techniques and extend previous results on this topic by removing the assumption of wellposedness, which is often highly challenging to verify for concrete PDE models.  Future research topics include the design of robust controllers for nonlinear PHS.

\medskip

\noindent\textbf{Acknowledgement.}
The authors are grateful to Jukka-Pekka Humaloja for helpful discussions on 
PHS.

\end{document}